\newtheorem{thm}{Theorem}
\newtheorem{prop}{Proposition}
\newtheorem{lemma}{Lemma}
\newtheorem{definition}[thm]{Definition}
\newtheorem{assumption}{Assumption}
\newtheorem{observation}{Observation}
\def \R {\mathbb{R}}
\def \D {\mathcal{D}}
\def \E {\mathrm{E}}
\def \x {\mathbf{x}}
\def \X {\mathcal{X}}
\def \xh {\widehat{\x}}
\def \u {\mathbf{u}}
\def \v {\mathbf{v}}
\def \w {\mathbf{w}}
\def \R {\mathbb{R}}
\def \Hh {\widehat{\H}}
\def \R {\mathbb{R}}
\def \x {\mathbf{x}}
\def \E {\mathrm{E}}
\def \u {\mathbf{u}}
\def \xh {\widehat{\x}}
\def \w {\mathbf{w}}
\def \v {\mathbf{v}}
\def \D {\mathcal{D}}
\def \X {\mathcal{X}}
\def \Hh {\widehat{H}}
\title{On Data Preconditioning for Regularized Loss Minimization}
\author{
Tianbao Yang$^1$\thanks{tianbao-yang@uiowa.edu},  Rong Jin$^{2,3}$,  Shenghuo Zhu$^3$, Qihang Lin$^4$\\
$^1$Department of Computer Science, the University of Iowa\\
$^2$Department of Computer Science and Engineering, Michigan State University\\
$^3$Alibaba Group \\
$^4$Department of Management Sciences, the University of Iowa\\
}
\begin{document}

\maketitle
\vspace*{-0.25in}
\begin{abstract}
In this work, we study data preconditioning, a well-known and long-existing technique, for boosting the convergence of first-order methods for regularized loss minimization in machine learning. It is well understood that the condition number of the problem, i.e., the ratio of the Lipschitz constant to the strong convexity modulus, has a harsh effect on the convergence of the first-order optimization methods. Therefore,  minimizing a small regularized loss for achieving good generalization performance, yielding an ill conditioned problem,  becomes the bottleneck for big data problems. We provide a theory on data preconditioning for regularized loss minimization. In particular, our analysis exhibits an appropriate data preconditioner that is similar to zero component analysis (ZCA) whitening. Exploiting the concepts of numerical rank and coherence, we  characterize the conditions on the loss function and on the data under which data preconditioning can reduce the condition number and therefore boost the convergence for minimizing the regularized loss. To make the data preconditioning practically useful, we propose an efficient preconditioning method through random sampling. The preliminary experiments on simulated data sets and real data sets validate our theory.
\end{abstract}

\section{Introduction}
\label{submission}
Many supervised machine learning tasks end up with solving the following regularized loss minimization (RLM) problem:  
\begin{align}\label{eqn:p}
\min_{\w\in\R^d} \frac{1}{n}\sum_{i=1}^n\ell(\x_i^{\top}\w, y_i) + \frac{\lambda}{2}\|\w\|_2^2,
\end{align}
where $\x_i\in\mathcal X\subseteq\R^d$ denotes the feature representation, $y_i\in\mathcal Y$ denotes the supervised information, $\w\in\R^d$ represents the decision vector  and $\ell(z,y)$ is a convex loss function with respect to $z$. Examples can be found in classification (e.g., $\ell(\x^{\top}\w, y) = \log(1+\exp(-y\x^{\top}\w))$ for logistic regression) and regression (e.g., $\ell(\x^{\top}\w, y) = (1/2)(\x^{\top}\w - y)^2$ for least square regression). 

The first-order methods that base on the first-order information (i.e., gradient) have recently become the dominant approaches for solving the optimization problem in~(\ref{eqn:p}), due to their light computation compared to the second-order methods (e.g., the Newton method).  Because of  the explosive growth of data, recently many stochastic optimization algorithms have emerged  to further  reduce the running time of full gradient methods~\cite{opac-b1104789}, including stochastic gradient descent (SGD)~\cite{DBLP:conf/icml/Shamir013,DBLP:journals/mp/Shalev-ShwartzSSC11}, stochastic average gradient (SAG)~\cite{DBLP:conf/nips/RouxSB12}, stochastic dual coordinate ascent (SDCA)~\cite{DBLP:journals/jmlr/Shalev-Shwartz013,Hsieh:2008:DCD:1390156.1390208}, stochastic variance reduced gradient (SVRG)~\cite{NIPS2013_4937}. 
One limitation of most first-order methods is that they suffer from a poor convergence if the condition number is small. For instance, the gradient-based stochastic optimization algorithm Pegasos~\cite{DBLP:journals/mp/Shalev-ShwartzSSC11} for solving Support Vector Machine (SVM) with a Lipschitz continuous loss function, has  a convergence rate of $O\left(\frac{\bar L^2}{\lambda T}\right)$, where $\bar L$ is the Lipschitz constant of the loss function w.r.t $\w$. The convergence rate reveals that the smaller the condition number (i.e., $\bar L^2/\lambda$), the worse the convergence. The same phenomenon occurs in optimizing a smooth loss function. Without loss of generality, the iteration complexity -- the number of iterations required for achieving an $\epsilon$-optimal solution,  of SDCA, SAG and SVRG for a $L$-smooth loss function (whose gradient is  $\bar L$-Lipschitz continuous) is $O((n + \frac{\bar L}{\lambda})\log(\frac{1}{\epsilon}))$. Although the convergence is linear for a smooth loss function, however, iteration complexity would be dominated by the condition number $\bar L/\lambda$ if it is substantially large~\footnote{The condition number of the problem in~(\ref{eqn:p}) for the Lipschitz continuous loss function is referred to $\bar L^2/\lambda$, and for the smooth loss function is referred to $\bar L/\lambda$, where $\bar L$ is the Lipschitz constant for the function and its gradient w.r.t $\w$, respectively. }. As supporting evidences, many studies have found that setting $\lambda$ to a very small value plays a pivotal role in achieving good generalization performance~\cite{conf/icml/Shalev-ShwartzS08,conf/nips/SridharanSS08}, especially for data sets with a large number of examples.  Moreover, some theoretical analysis indicates that the value of $\lambda$ could be as small as $1/n$ in order to achieve a small generalization error~\cite{conf/icml/Shalev-ShwartzS08,DBLP:journals/jmlr/Shalev-Shwartz013}. Therefore, it arises as  an interesting question ``\textit{can we design first-order optimization algorithms that have less severe  and even no dependence on the large condition number}"?

While most previous works target on improving the convergence rate by achieving a better dependence on  the  number of iterations $T$, few works have revolved around  mitigating  the dependence on the condition number.  \cite{DBLP:conf/nips/BachM13}  provided a new analysis of the averaged stochastic gradient (ASG) algorithm for minimizing a smooth objective function with a constant step size. They established  a convergence rate of $O(1/T)$ without suffering from the small strong convexity modulus (c.f. the definition given in {\bf Definition}~\ref{def:2}). Two recent  works~\cite{journals/corr/NeedellSW13,Xiao:arXiv1403.4699} proposed to use importance sampling instead of random sampling in stochastic gradient methods, leading to a dependence on the averaged Lipschitz constant of the individual loss functions instead of the worst Lipschitz constant. However, the convergence rate still badly depends on $1/\lambda$. 

In this paper, we explore the data preconditioning for reducing the condition number of the problem~(\ref{eqn:p}). In contrast to many other works, the proposed data preconditioning technique can be potentially applied together with any first-order methods to improve their convergences. Data preconditioning is a long-existing technique that was used to improve the condition  number of a data matrix. In the general form, data preconditioning is to apply $P^{-1}$ to the data, where $P$ is a non-singular matrix. It has been employed widely in solving linear systems~\cite{Axelsson:1995:ISM:206744}.  In the context of convex optimization, data preconditioning has been applied to conjugate gradient and Newton methods  to improve their convergence for ill-conditioned problems~\cite{langer2007preconditioned}. However,  it remains unclear how  data preconditioning can be used to  improve the convergence of first-order methods for minimizing a regularized empirical  loss. In the context of non-convex optimization, the data preconditioning by ZCA whitening has been widely adopted  in learning deep neural networks from image data to speed-up the optimization~\cite{journals/jmlr/RanzatoKH10,lecun-efficient-backprop-1998}, though the underlying theory is barely known. Interestingly, our analysis reveals that the proposed data preconditioner is closely related to ZCA whitening and therefore shed light on the practice widely deployed in deep learning.  However, an inevitable critique on the usage of data preconditioning is the computational overhead pertaining to computing the preconditioned data. Thanks to modern cluster of computers, this computational overhead can be made as minimal as possible with parallel computations  (c.f. the discussions in subsection~\ref{sec:imp}). We also propose a random sampling approach to efficiently compute the preconditioned data. 


In summary, our contributions include: 
(i) we present a theory on  data preconditioning for the regularized loss optimization by introducing an appropriate  data preconditioner (Section~\ref{sec:theory}); 
(ii) we quantify the conditions under which the data preconditioning can reduce the condition number and therefore  boost the convergence of  the first-order optimization methods (c.f. equations~(\ref{cond:1}) and~(\ref{cond:2})); %
(iii) we present an efficient approach for computing the preconditioned data and validate the theory by experiments (Section~\ref{sec:imp},~\ref{sec:exp}). 

\section{Related Work}
We review some related work in this section. In particular, we survey some stochastic optimization algorithms that belong to the category of the first-order methods and discuss the dependence of their convergence rates on the condition number and the data. To facilitate our analysis, we decouple the dependence on the data from the  condition number. Henceforth, we denote by $R$ the upper bound of  the data norm, i.e.,  $\|\x\|_2\leq R$,  and by $L$ the Lipschitz constant of the scalar loss function $\ell(z, y)$ or its gradient $\ell'(z,y)$ with respect to $z$ depending the smoothness of the loss function. Then the gradient  w.r.t $\w$ of the loss function is bounded by  $\|\nabla_{\w} \ell(\w^{\top}\x, y)\|_2= \|\ell'(\w^{\top}\x, y)\x\|_2\leq LR$ if $\ell(z,y)$ is a $L$-Lipschitz continuous non-smooth function. Similarly, the second order gradient can be bounded by $\|\nabla_{\w}^2\ell(\w^{\top}\x, y)\|_2=\|\ell''(\w^{\top}\x, y)\x\x^{\top}\|_2\leq LR^2$ assuming  $\ell(z, y)$ is a $L$-smooth function. As a result the condition number for a $L$-Lipschitz continuous scalar loss function is $L^2R^2/\lambda$ and is $LR^2/\lambda$ for a $L$-smooth loss function. In the sequel, we will refer to $R$, i.e., the upper bound of the data norm as the data ingredient of the condition number, and refer to $L/\lambda$ or $L^2/\lambda$, i.e., the ratio of the Lipschitz constant to the strong convexity modulus as the functional ingredient of the condition number. The analysis in  Section~\ref{sec:theory} and~\ref{sec:imp} will exhibit how the data preconditioning affects the two ingredients. 

Stochastic gradient descent is probably the most popular algorithm in stochastic optimization. Although many variants of SGD have been developed, the simplest SGD for solving the problem~(\ref{eqn:p}) proceeds as:
\begin{align*}
\w_{t} = \w_{t-1}  - \eta_t \left(\nabla \ell(\w_{t-1}^{\top}\x_{i_t}, y_{i_t})+\lambda \w_{t-1}\right),
\end{align*}
where $i_t$ is randomly sampled from $\{1,\ldots, n\}$ and $\eta_t$ is an appropriate step size. The value of the step size $\eta_t$ depends on the strong convexity modulus of the objective function. If the loss function is a Lipschitz continuous function, the value of $\eta_t$ can be set to $1/(\lambda t)$~\cite{DBLP:conf/icml/Shamir013} that yields a convergence rate of $O\left(\frac{R^2L^2}{\lambda T}\right)$ with a proper averaging scheme. 
It has been shown that SGD achieves  the minimax optimal convergence rate for a non-smooth loss function~\cite{DBLP:conf/icml/Shamir013};  however, it only yields a sub-optimal convergence for a smooth loss function (i.e., $O(1/\sqrt{T})$) in terms of $T$. The curse of decreasing step size
 is the major reason that leads to the slow convergence. On the other hand, the decreasing step size is necessary due to the large variance of the stochastic gradient when approaching the optimal solution. 

Recently, there are several works dedicated to improving the convergence rate for a smooth loss function. The motivation is to reduce the variance of the stochastic gradient so as to use a constant step size like the full gradient method. We briefly mention several pieces of works. \cite{DBLP:conf/nips/RouxSB12} proposed a stochastic average gradient (SAG) method, which maintains an averaged stochastic gradient summing from gradients on all examples and updates a randomly selected component using the current solution. \cite{NIPS2013_4937,NIPS2013_4940} proposed accelerated SGDs using predicative variance reduction. The key idea is to use a mix of  stochastic gradients and  a full gradient. The two works share a similar idea that the algorithms compute a full gradient every certain iterations and construct an unbiased stochastic gradient using the full gradient and the gradients on one example. Stochastic dual coordinate ascent (SDCA)~\cite{DBLP:journals/jmlr/Shalev-Shwartz013} is another stochastic optimization algorithm that enjoys a fast convergence rate  for smooth loss functions. Unlike SGD types of algorithms, SDCA works on the dual variables and at each iteration it samples one instance and updates the corresponding dual variable by increasing the dual objective. It was shown in~\cite{NIPS2013_4937} that SDCA also achieves a variance reduction. Finally, all these algorithms have a comparable linear convergence for smooth loss functions with the iteration complexity being  characterized by $O\left(\left(n + \frac{R^2L}{\lambda}\right)\log(\frac{1}{\epsilon})\right)$~\footnote{The stochastic algorithm in \cite{NIPS2013_4940} has a quadratic dependence on the condition number.}. 

While most previous works target on improving the convergence rate for a better dependence on  the  number of iterations $T$, they have innocently ignored the fact of condition number. It has been observed when the condition number is very large, SGD suffers from a strikingly slow convergence due to that the step size $1/(\lambda t)$ is too large at the beginning of the iterations. The condition number is also an obstacle that prevents the scaling-up of the variance-reduced stochastic algorithms, especially when exploring the mini-batch technique. For instance,~\cite{NIPS2013_4938} proposed a mini-batch SDCA in which the iteration  complexity can be improved from  $O(\frac{n}{\sqrt{m}})$ to $O(\frac{n}{m})$ if the condition number is reduced from $n$ to $n/m$, where $m$ is the size of the mini-batch.  

Recently, there is a resurge of interest in importance sampling for stochastic optimization methods,  aiming to reduce  the condition number.  For example, Needell et al.~\cite{journals/corr/NeedellSW13} analyzed SGD with importance sampling  for strongly convex objective that is composed of individual smooth functions, where the sample for computing a stochastic gradient is drawn from a distribution with probabilities proportional to smoothness parameters of individual smooth functions. They showed that importance sampling can lead to a speed-up, improving the iteration complexity  from a quadratic dependence on the conditioning $(L/\lambda)^2$ (where $L$ is a bound on the smoothness and $\lambda$ on the strong convexity) to a linear dependence on $L/\lambda$. \cite{DBLP:journals/corr/ZhaoZ14,Xiao:arXiv1403.4699} analyzed the effect of importance sampling for stochastic mirror descent, stochastic dual coordinate ascent and stochastic variance reduced gradient method, and showed reduction on the condition number in the iteration complexity. However, all of these works could still suffer from very small strong convexity parameter $\lambda$ as in~(\ref{eqn:p}). Recently \cite{DBLP:conf/nips/BachM13}  provided a new analysis of the averaged stochastic gradient algorithm for a smooth objective function with a constant step size. They established  a convergence rate of $O(1/T)$ without suffering from the small strong convexity modulus. It has been observed  by empirical studies that  it could outperform SAG for solving least square regression and logistic regression. However, our experiments demonstrate that with data preconditioning the convergence of SAG can be substantially improved and better than that of  \cite{DBLP:conf/nips/BachM13}'s algorithm. More discussions can be found in the end of the subsection~\ref{sec:cond}. 

In recent years,  the idea of data preconditioning has been deployed in lasso~\cite{Jia:arXiv1208.5584,conf/recomb/HuangJ11,paul2008,NIPS2013_5104} via pre-multiplying the data matrix $X$ and the response vector $y$ by suitable matrices $P_X$ and $P_y$, to improve the support recovery properties. It was also brought to our attention that in ~\cite{DBLP:journals/abs/1502.03571} the authors applied data preconditioning to overdetermined $\ell_p$ regression problems and exploited SGD for the preconditioned problem. The big difference between our work and these works is that  we place emphasis  on applying data preconditioning to first-order stochastic optimization algorithms for solving the RLM problem in~(\ref{eqn:p}). Another remarkable difference  between the present work and these works is that in our study data preconditioning only applies to the feature vector $\x$ not the response vector $y$.  \

We also note that data preconditioning exploited in this work is different from preconditioning in some optimization algorithms that transforms the gradient by a preconditioner matrix or an adaptive matrix~\cite{6126441,Duchi:2011:ASM:1953048.2021068}. It is also different from the Newton method that multiplies the gradient by the inverse of the Hessian matrix~\cite{Boyd:2004:CO:993483}. As a comparison, the preconditioned data  can be computed offline and  the computational overhead can be made as minimal  as possible by using a large computer cluster with parallel computations. Unlike most previous works,  we strive to improve the convergence rate from the angle of reducing the condition number. We present a theory that characterizes  the conditions when the proposed data preconditioning can improve the convergence compared to the one without using data preconditioning. The contributed theory and technique act as an additional flavoring in the stochastic optimization that could improve the convergence speed. 

\section{Preliminaries}
In this section, we briefly introduce some key definitions  that are useful throughout the paper and then discuss a naive approach of applying data preconditioning for the RLM problem.

\begin{definition}
A function $f(\x): \R^d \rightarrow \R$ is a $L$-Lipschitz continuous function w.r.t a norm $\|\cdot\|$, if
\begin{align*}
|f(\x_1)- f(\x_2)|\leq L\|\x_1-\x_2\|,\forall \x_1,\x_2.
\end{align*}
\end{definition}
\begin{definition}\label{def:2}
A convex function $f(\x):\R^d\rightarrow\R$ is $\beta$-strongly convex w.r.t a norm $\|\cdot\|$, if for any $\alpha\in[0, 1]$
\begin{align*}
f(\alpha\x_1+ (1-\alpha)\x_2)\leq \alpha f(\x_1) + (1-\alpha) f(\x_2) - \frac{1}{2}\alpha(1-\alpha)\beta\|\x_1-\x_2\|^2, \forall \x_1,\x_2.
\end{align*}
where $\beta$ is also called the strong convexity modulus of $f$.
When $f(\x)$ is differentiable, the strong convexity is equivalent to
\begin{align*}
f(\x_1) \geq f(\x_2) + \langle \nabla f(\x_2), \x_1-\x_2\rangle + \frac{\beta}{2}\|\x_1-\x_2\|^2, \forall \x_1,\x_2.
\end{align*}
\end{definition}
\begin{definition}
A function $f(\x):\R^d\rightarrow\R$ is $L$-smooth w.r.t a norm $\|\cdot\|$, if it is differentiable and its gradient is $L$-Lipschitz continuous, i.e., 
\[
\|\nabla f(\x_1) - \nabla f(\x_2)\|_*\leq L\|\x_1 - \x_2\|, \forall \x_1, \x_2
\]
where $\|\cdot\|_*$ denotes the dual norm of $\|\cdot\|$, 
or equivalently
\begin{align*}
f(\x_1) \leq f(\x_2) + \langle \nabla f(\x_2), \x_1-\x_2\rangle + \frac{L}{2}\|\x_1-\x_2\|^2, \forall \x_1,\x_2.
\end{align*}
\end{definition}
In the sequel, we use the standard Euclidean norm to define Lipschitz and strongly convex functions. Examples of smooth loss functions include the logistic loss $\ell(\w; \x, y) = \log(1+\exp(-y\w^{\top}\x))$ and the square loss $\ell(\w; \x, y) = \frac{1}{2}(\w^{\top}\x-y)^2$. The $\ell_2$ norm regularizer $\frac{\lambda}{2}\|\w\|_2^2$ is a $\lambda$-strongly convex function. 


Although the proposed data preconditioning can be applied to boost any first-order methods, we will restrict our attention to the stochastic gradient methods, which share the following updates for~(\ref{eqn:p}) : 
\begin{align}\label{eqn:sgd}
\w_{t} = \w_{t-1}  - \eta_t \left(g_t(\w_{t-1})+\lambda \w_{t-1}\right),
\end{align}
where $g_t(\w_{t-1})$ denotes a stochastic gradient of the loss that depends on the original data representation. For example, the vanilla SGD for optimizing non-smooth loss uses $g_t(\w_{t-1})= \nabla\ell(\w^{\top}_{t-1}\x_{i_t}; y_{i_t})\x_{i_t}$, where $i_t$ is randomly sampled. SAG and SVRG  use a particularly designed stochastic gradient for minimizing a  smooth loss.  

A straightforward approach by exploring data preconditioning for the solving problem in~(\ref{eqn:p}) is by variable transformation. Let $P$ be a symmetric  non-singular matrix under consideration. Then we can cast the problem in~(\ref{eqn:p}) into:
\begin{align}\label{eqn:P1}
\min_{\u\in\R^d} \frac{1}{n}\sum_{i=1}^n\ell(\x_i^{\top}P^{-1}\u, y_i) + \frac{\lambda}{2}\|P^{-1}\u\|_2^2,
\end{align}
which could be implemented by preconditioning the data $\xh_i = P^{-1}\x_i$. Applying the stochastic gradient methods to the problem above we have the following update:
\begin{align*}
\u_{t} = \u_{t-1}  - \eta_t \left(g_t(\u_{t-1})+\lambda P^{-2}\u_{t-1}\right),
\end{align*}
where $g_t(\u_{t-1})$ denotes a stochastic gradient of the loss that depends on the transformed data representation.
However, there are two difficulties limiting the applications of the technique. First, what is an appropriate data preconditioner $P^{-1}$? Second, at each step we need to compute $P^{-2}\u_{t-1}$, which might add a significant cost ($O(d^2)$ if $P^{-2}$ is pre-computed and is a  dense matrix) to each iteration. To address these issues, we present a theory in the next section. In particular, we tackle three major questions: (i) what is the appropriate data preconditioner for the first-order  methods to minimize the regularized loss as in~(\ref{eqn:p}); (ii) under what conditions (w.r.t the data and the loss function)  the data preconditioning can boost the convergence; and (iii) how to efficiently compute the preconditioned data.

\section{Theory}\label{sec:theory}
\subsection{Data preconditioning for Regularized Loss Minimization}
The first question that we are about to address is ``what is the  condition on the loss function 
in order for data preconditioning to take effect''. The question turns out to be related to how we construct the preconditioner. We  are inclined to give the condition first and explain it when we construct the preconditioner. To facilitate our discussion, we assume that the first argument of the loss function is bounded by $r$, i.e., $|z|\leq r$. We defer the discussion on the value of $r$ to the end of this section. 
The condition for the loss function given below is complimentary to the property of Lipschitz continuity. 
\begin{assumption}\label{ass:1}
The scalar loss function $\ell(z, y)$  w.r.t $z$ satisfies $\ell''(z,y)\geq \beta$ for $|z|\leq r$ and  $\beta>0$. 
\end{assumption}
Below we discuss several important loss functions used in machine learning and statistics that have such a property. 
\begin{itemize}
\item Square loss. The square loss $\ell(z,y)=\frac{1}{2}|y-z|^2$ has been  used in ridge regression and classification. It is clear that the square loss satisfies the assumption for any $z$ and $\beta=1$. 
\item Logistic loss. The logistic loss $\ell(z,y) = \log(1+\exp(-zy))$ where $y\in\{1,-1\}$ is used in logistic regression for classification. We can compute the second order gradient by $\ell''(z,y) = \sigma(yz)(1-\sigma(yz))$, where $\sigma(z) = 1/(1+\exp(-z))$ is the sigmoid function. Then it is not difficult to show that when $|z|\leq r$, we have $\ell''(z,y)\geq \sigma(r)(1-\sigma(r))$. Therefore the assumption~(\ref{ass:1}) holds for $\beta(r) = \sigma(r)(1-\sigma(r).$
\item Possion regression loss. In statistics,  Poisson regression is a form of regression analysis used to model count data and contingency tables. The equivalent loss function is given by $\ell(z,y) = \exp(z) - yz$. Then $\ell''(z,y) = \exp(z)\geq \exp(-r)$ for $|z|\leq r$. Therefore the assumption~(\ref{ass:1}) hold for $\beta(r) = \exp(-r)$.
\end{itemize}
It is notable that the Assumption~\ref{ass:1} does not necessarily indicate that the entire loss $(1/n)\sum_{i=1}^n\ell(\w^{\top}\x_i, y_i)$ is a strongly convex function w.r.t $\w$ since the second order gradient, i.e.,  $\frac{1}{n}\sum_{i=1}^m\ell''(\w^{\top}\x_i, y_i)\x_i\x_i^{\top}$ is not necessarily lower bounded by a positive constant. Therefore the introduced condition does not change the convergence rates that we have discussed. 
The construction of the data preconditioner  is motivated by the following observation. Given $\ell''(z,y)\geq \beta$ for any $|z| \leq r$, we can define a  new loss function $\phi(z, y)$ by
\begin{align*}
    \phi(z, y) = \ell(z, y) - \frac{\beta}{2}z^2,
\end{align*}
 and we can easily show that $\phi(z,y)$ is convex for $|z|\leq r$. Using $\phi(z, y)$, we can transform the problem in~(\ref{eqn:p}) into:
\begin{align*}
\min_{\w\in\R^d} \frac{1}{n}\sum_{i=1}^n\phi(\w^{\top}\x_i, y_i)& + \frac{\beta}{2}\w^{\top}\frac{1}{n}\sum_{i=1}^n\x_i\x_i^{\top}\w+ \frac{\lambda}{2}\|\w\|_2^2.
\end{align*}
Let  $C=\frac{1}{n}\sum_{i=1}^n\x_i\x_i^{\top}$ denote  the sample covariance matrix.  We define a smoothed covariance matrix $H$ as
\[
H = \rho I + \frac{1}{n}\sum_{i=1}^n \x_i\x_i^{\top} = \rho I + C,
\]
where  $\rho = \lambda/\beta$. Thus, the transformed problem becomes 
 \vspace*{-0.1in}
\begin{align}\label{eqn:np}
\min_{\w\in\R^d} \frac{1}{n}\sum_{i=1}^n\phi(\w^{\top}\x_i, y_i)& + \frac{\beta}{2}\w^{\top}H\w.
\end{align}
Using the variable transformation $\v \leftarrow  H^{1/2}\w$, the above problem is equivalent to  
\begin{align}\label{eqn:np}
\min_{\v\in\R^d} \frac{1}{n}\sum_{i=1}^n\phi(\v^{\top}H^{-1/2}\x_i, y_i)& + \frac{\beta}{2}\|\v\|_2^2.
\end{align}
It can be shown that the optimal  value of the above preconditioned  problem is equal to that  of the original problem~(\ref{eqn:p}).  As a matter of fact, so far we  have constructed a data preconditioner as given by $P^{-1}= H^{-1/2}$ that transforms the original feature vector $\x$ into a new vector $H^{-1/2}\x$. It is worth noting that the data preconditioning  $H^{-1/2}\x$ is similar  to the ZCA whitening transformation, which transforms the data using the covariance matrix, i.e.,  $C^{-1/2}\x$ such that the data has identity covariance matrix. Whitening transformation has found many applications in image processing~\cite{books/daglib/0019304}, 
 and it is also  employed in independent component analysis~\cite{Hyvarinen:2000:ICA:351654.351659} and optimizing deep neural networks~\cite{journals/jmlr/RanzatoKH10,lecun-efficient-backprop-1998}. A similar idea has been used  decorrelation of the covariate/features in statistics~\cite{mardia1979multivariate}. Finally, it is notable that when original data is sparse the preconditioned data may become dense, which may increase  the per-iteration cost. It would  pose stronger conditions for the data preconditioning to take effect. In our experiments, we focus on dense data sets.

\subsection{Condition Number}\label{sec:cond}
Besides the data, there are two additional alterations: (i) the strong convexity modulus is changed  from $\lambda$ to $\beta$ and (ii) the loss function becomes  $\phi(z, y)=\ell(z,y) - \frac{\beta}{2}z^2$. Before discussing  the convergence rates of the first-order optimization methods for solving the preconditioned problem in~(\ref{eqn:np}), we elaborate on  how the two ingredients of the condition number are affected:  (i) the functional ingredient namely the ratio of the  Lipschitz constant of the loss function to the strong convexity modulus  and (ii) the data ingredient namely the upper bound of the data norm.  We first analyze the change of the functional ingredient as summarized  in the following lemma. 
\begin{lemma}\label{lem:1}
If $\ell(z,y)$ is a $L$-Lipschitz continuous function, then $\phi(z, y)$ is $(L+\beta r)$-Lipschitz continuous for $|z|\leq r$. If $\ell(z,y)$ is a $L$-smooth function, then $\phi(z, y)$ is a $(L- \beta)$-smooth function. 
\end{lemma}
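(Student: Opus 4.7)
The plan is to prove both claims by directly analyzing $\phi(z,y) = \ell(z,y) - \tfrac{\beta}{2}z^2$ through its first and (where needed) second derivatives in the scalar argument $z$, since all the relevant regularity properties of $\ell$ are stated with respect to $z$.

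For the Lipschitz part, I would first observe that an $L$-Lipschitz continuous convex loss $\ell(\cdot,y)$ has $|\ell'(z,y)|\le L$ almost everywhere (using the subgradient interpretation if $\ell$ is nondifferentiable). Then $\phi'(z,y)=\ell'(z,y)-\beta z$, and for $|z|\le r$ the triangle inequality gives
\begin{equation*}
|\phi'(z,y)|\;\le\;|\ell'(z,y)|+\beta|z|\;\le\;L+\beta r,
\end{equation*}
which is exactly the claimed $(L+\beta r)$-Lipschitz bound on $\phi(\cdot,y)$ over the interval $|z|\le r$.

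For the smoothness part, I would use Assumption~\ref{ass:1} together with $L$-smoothness of $\ell$ to squeeze the second derivative: $\beta\le \ell''(z,y)\le L$ for $|z|\le r$. Consequently
\begin{equation*}
\phi''(z,y)=\ell''(z,y)-\beta\in[0,\,L-\beta],
\end{equation*}
so $\phi(\cdot,y)$ is convex and its derivative $\phi'(\cdot,y)$ is $(L-\beta)$-Lipschitz, which is the definition of $(L-\beta)$-smoothness. If $\ell$ is only assumed once-differentiable rather than twice, the same conclusion follows by integrating the inequality $\ell'(z_1,y)-\ell'(z_2,y)\in[\beta(z_1-z_2),\,L(z_1-z_2)]$ (valid for $z_1\ge z_2$ with both in $[-r,r]$) and subtracting off the linear term $\beta(z_1-z_2)$.

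There is essentially no hard step: the only subtlety is that the Lipschitz bound is local (it uses $|z|\le r$), which is why the statement carries the qualifier ``for $|z|\le r$'', whereas the smoothness bound depends only on curvature bounds and is therefore stated without that qualifier on $z$ even though the lower bound on $\ell''$ comes from Assumption~\ref{ass:1}. I would make this explicit in the write-up so the reader sees that the two conclusions have slightly different scopes.
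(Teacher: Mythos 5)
Your proof is correct, and it takes a mildly different route from the paper's. For the Lipschitz part the paper bounds function-value differences directly, writing $|\phi(z_1,y)-\phi(z_2,y)|\le L|z_1-z_2|+\frac{\beta}{2}|z_1-z_2|^2\le (L+\beta r)|z_1-z_2|$, whereas you bound the (sub)derivative, $|\phi'(z,y)|\le|\ell'(z,y)|+\beta|z|\le L+\beta r$ on $|z|\le r$; the two are equivalent one-line arguments, and your subgradient remark covers the nondifferentiable case just as well. For the smoothness part the paper starts from the monotonicity-type inequality $\langle\ell'(z_1,y)-\ell'(z_2,y),\,z_1-z_2\rangle\le L|z_1-z_2|^2$, substitutes $\ell'=\phi'+\beta z$, and cites Nesterov to pass from $\langle\phi'(z_1,y)-\phi'(z_2,y),\,z_1-z_2\rangle\le(L-\beta)|z_1-z_2|^2$ to $(L-\beta)$-smoothness; that last step tacitly needs the matching lower bound on the product, which comes from convexity of $\phi$ (i.e., $\ell''\ge\beta$ on $|z|\le r$, or more generally $\ell''\ge 0$ together with $\beta\le L-\beta$). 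Your second-derivative sandwich $\beta\le\ell''(z,y)\le L$, hence $0\le\phi''(z,y)\le L-\beta$, makes exactly this hidden ingredient explicit, and your integrated first-derivative version handles the once-differentiable reading. Your closing remark about the different scopes of the two conclusions (the Lipschitz bound is local to $|z|\le r$, while the curvature bounds driving the smoothness claim also originate from Assumption~\ref{ass:1}) is a point the paper glosses over, and it is worth keeping.
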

\begin{proof}
If $\ell(z,y)$ is a $L$-Lipschitz continuous function, the new function $\phi(z, y)$ is a $(L  + \beta r)$-Lipschitz continuous for $|z|\leq r$ because
\begin{align*}
|\phi(z_1, y) - \phi(z_2, y)|&\leq L|z_1-z_2|  + \frac{\beta}{2}|z_1 -z_2|^2\\
&\leq (L + \beta r)|z_1 - z_2|
\end{align*}
If $\ell(z, y)$ is a $L$-smooth function, then the following equality holds~\cite{opac-b1104789}
\[
\langle\ell'(z_1,y) - \ell'(z_2,y), z_1-z_2\rangle\leq L|z_1-z_2|^2. 
\]
By the definition of $\phi(z,y)$, we have
\begin{align*}
&\langle\phi'(z_1, y) + \beta z_1 - \phi'(z_2,y) - \beta z_2, z_1-z_2\rangle\leq L|z_1-z_2|^2
\end{align*}
Therefore 
\begin{align*}
&\langle\phi'(z_1, y) - \phi'(z_2,y),  z_1-z_2\rangle \leq (L-\beta)|z_1-z_2|^2 
\end{align*}
which implies $\phi(z,y)$ is a $(L-\beta)$-smooth function~\cite{opac-b1104789}. 
\end{proof}

Lemma~\ref{lem:1} indicates that after the data preconditioning the functional ingredient becomes $\displaystyle (L+\beta r)^2/\beta$ for a $L$-Lipschitz continuous non-smooth loss function and $\displaystyle{(L-\beta)}/{\beta}$ for a $L$-smooth function. Next, we analyze  the upper bound of the preconditioned data $\widehat\x = H^{-1/2}\x$. Noting that $\|\widehat\x\|_2^2=\x^{\top}H^{-1}\x$, in what follows we will focus on bounding $\max_i\x_i^{\top}H^{-1}\x_i$.  We first derive and discuss  the bound of  the expectation $\E_{i}[\x_{i}^{\top}H^{-1}\x_{i}]$ treating $i$ as a random variable in $\{1,\ldots, n\}$, which is useful in proving the convergence bound of the objective in expectation. Many discussions also carry  over to the upper bound for individual data. 
Let $ \frac{1}{\sqrt{n}}X=\frac{1}{\sqrt{n}}(\x_1,\cdots, \x_n)=U\Sigma V^{\top}$ be the singular value decomposition of $X$, where $U\in\R^{d\times d}, V\in\R^{n\times d}$ and $\Sigma=diag(\sigma_1,\ldots, \sigma_d), \sigma_1\geq \ldots\geq \sigma_d$, then $C = U\Sigma^2U^{\top}$ is the eigen-decomposition of $C$. Thus, we have
\begin{align}
\E_{i}[\x^{\top}_{i}H^{-1}\x_i] =\frac{1}{n}\sum_{i=1}^n\x_i^{\top}H^{-1}\x_i= tr(H^{-1}C)=\sum_{i=1}^d\frac{\sigma^2_i}{\sigma^2_i +\rho}  \buildrel \Delta \over = \gamma(C, \rho).
\end{align}
where the expectation is taken over the randomness in the index $i$, which is also the source of randomness in stochastic gradient descent methods. 
We refer to $\gamma(C,\rho)$ as the numerical rank of $C$ with respect to $\rho$.   
The first observation is that $\gamma(C,\rho)$ is a monotonically  decreasing function in terms of $\rho$. 
It is straightforward to show that if $X$ is low rank, e.g., $rank(X)=k\ll d$, then $\gamma(C,\rho)<k$. 
If $C$ is full rank, the value of $\gamma(C,\rho)$ will be affected by the decay of its eigenvalues. Bach~\cite{DBLP:conf/colt/Bach13} has derived  the order of $\gamma(C,\rho)$ in $\rho$ under two different decays of the eigenvalues of $C$. The following proposition summarizes the order of $\gamma(C,\rho)$ under two different decays of the eigenvalues. 
\begin{prop}\label{prop:1}
If  the eigenvalues of $C$ follow a polynomial decay $\sigma_i^2 = i^{-2\tau}, \tau\geq1/2$, then $\gamma(C,\rho)\leq O(\rho^{-1/(2\tau)})$, and if the eigenvalues of $C$ satisfy an exponential decay $\sigma_i^2=e^{-\tau i}$, then $\gamma(C, \rho)\leq O\left(\log\left(\frac{1}{\rho}\right)\right)$. 
\end{prop}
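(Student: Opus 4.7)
The plan is to bound $\gamma(C,\rho)=\sum_{i=1}^d \sigma_i^2/(\sigma_i^2+\rho)$ by a standard head/tail split at the index $i^\star$ where $\sigma_i^2$ crosses the threshold $\rho$. For $i\leq i^\star$ I use the trivial bound $\sigma_i^2/(\sigma_i^2+\rho)\leq 1$, so this block contributes at most $i^\star$. For $i>i^\star$ I use $\sigma_i^2/(\sigma_i^2+\rho)\leq \sigma_i^2/\rho$ and control the tail either by an integral comparison (polynomial case) or by summing a geometric series (exponential case). This reduces both statements to elementary computations once $i^\star$ is identified.

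For the polynomial decay $\sigma_i^2=i^{-2\tau}$, the equation $i^{-2\tau}=\rho$ gives $i^\star=\lceil \rho^{-1/(2\tau)}\rceil$, so the head contributes $O(\rho^{-1/(2\tau)})$. For the tail I compare with an integral,
\begin{align*}
\sum_{i>i^\star}\frac{i^{-2\tau}}{\rho}\;\leq\;\frac{1}{\rho}\int_{i^\star}^{\infty}x^{-2\tau}\,dx\;=\;\frac{(i^\star)^{1-2\tau}}{\rho(2\tau-1)},
\end{align*}
and plugging in $i^\star\asymp \rho^{-1/(2\tau)}$ gives $O(\rho^{-1/(2\tau)}/(2\tau-1))$, so the two pieces together are $O(\rho^{-1/(2\tau)})$ as claimed.

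For the exponential decay $\sigma_i^2=e^{-\tau i}$, solving $e^{-\tau i}=\rho$ gives $i^\star=\lceil \log(1/\rho)/\tau\rceil$, so the head contributes $O(\log(1/\rho))$. The tail is a geometric series,
\begin{align*}
\sum_{i>i^\star}\frac{e^{-\tau i}}{\rho}\;\leq\;\frac{e^{-\tau i^\star}}{\rho(1-e^{-\tau})}\;\leq\;\frac{1}{1-e^{-\tau}}\;=\;O(1),
\end{align*}
so the total is dominated by the head, giving $\gamma(C,\rho)=O(\log(1/\rho))$.

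I do not anticipate a serious obstacle: the whole argument is a head/tail split followed by a one-line integral or geometric estimate. The only mild care point is the boundary case $\tau=1/2$ in the polynomial setting, where $\int x^{-2\tau}dx$ becomes a logarithm rather than a power; there the claimed $O(\rho^{-1})$ rate is still correct because the $\log(1/\rho)$ factor produced by the integral is absorbed into the same order, or, alternatively, one truncates at $d$ and invokes the trivial bound $\gamma(C,\rho)\leq d$ whenever $d\leq \rho^{-1/(2\tau)}$.
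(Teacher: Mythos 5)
Your argument is correct and reaches the stated bounds, but by a genuinely different route from the paper. You split the sum at the crossover index $i^\star$ where $\sigma_i^2\approx\rho$, bound the head by $i^\star$ terms of size at most $1$, and control the tail via $\sigma_i^2/(\sigma_i^2+\rho)\leq\sigma_i^2/\rho$ together with an integral comparison (polynomial case) or a geometric series (exponential case). The paper instead bounds the \emph{entire} sum by a single integral, $\sum_i\frac{1}{1+i^{2\tau}\rho}\leq\int_0^d\frac{dt}{1+t^{2\tau}\rho}$ and $\sum_i\frac{e^{-\tau i}}{e^{-\tau i}+\rho}\leq\int_0^d\frac{e^{-\tau t}}{e^{-\tau t}+\rho}\,dt$, and evaluates these via the substitutions $s=\rho t^{2\tau}$ and $s=e^{-\tau t}$, getting $O(\rho^{-1/(2\tau)})$ from the finiteness of $\int_0^\infty\frac{s^{1/(2\tau)-1}}{1+s}\frac{ds}{2\tau}$ and the closed form $\frac{1}{\tau}\log\frac{1+\rho}{\rho}$ in the exponential case. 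Your route is more elementary (no change of variables, no appeal to a Beta-type integral being finite) and produces explicit constants such as $1/(2\tau-1)$ and $1/(1-e^{-\tau})$; the paper's route handles the whole sum in one stroke and gives a cleaner exact expression in the exponential case. One caveat: your treatment of the boundary $\tau=1/2$ is not right as stated --- a factor $\log(1/\rho)$ cannot be ``absorbed'' into $O(\rho^{-1})$, and truncating at $d$ only rescues the case $d\leq\rho^{-1}$. You are in good company, though: the paper's own proof also fails at $\tau=1/2$, since there the extended integral $\int_0^\infty\frac{ds}{1+s}$ diverges; both arguments really establish the claim for $\tau>1/2$ (with constants degrading as $\tau\downarrow 1/2$), and at the boundary one only obtains a bound of order $\rho^{-1}\log(1+\rho d)$.
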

For completeness, we include the proof in the Appendix A.  In statistics~\cite{hastie01statisticallearning},  $\gamma(C,\rho)$ is also referred to as the effective degree of freedom. In order to prove high probability bounds, we have to derive the upper bound for individual $\x_i^{\top}H^{-1}\x_i$. 
To this end, we introduce the following measure to quantify the incoherence of $V$. 
\begin{definition} The generalized incoherence measure of an orthogonal matrix $V\in\R^{n\times d}$ w.r.t to $(\sigma_1^2,\ldots, \sigma^2_d)$ and $\rho>0$ is 
\begin{equation}
\mu(\rho)= \max\limits_{1 \leq i \leq n} \frac{n}{\gamma(C, \rho)}\sum_{j=1}^d\frac{\sigma^2_j}{\sigma^2_j +\rho}V_{ij}^2.
\end{equation}
\end{definition}
Similar to the incoherence measure introduced in the compressive sensing theory~\cite{citeulike:1227656}, the generalized incoherence also measures the degree to which the rows in $V$are correlated with the canonical bases. We can also establish the relationship between the two incoherence measures. The incoherence of an orthogonal matrix $V\in\R^{n\times n}$ is defined  as $\mu=\max_{ij}\sqrt{n}V_{ij}$~\cite{citeulike:1227656}. With simple algebra, we can show that $\mu(\rho)\leq \mu^2$. Since $\mu\in[1,\sqrt{n}]$, therefore $\mu(\rho)\in[1, n]$. 
Given the definition of $\mu(\rho)$, we have the following lemma on the upper bound of $\x_i^{\top}H^{-1}\x_i$. 
\begin{lemma}\label{lem:2} 
$\x^{\top}_i H^{-1}\x_i\leq \mu(\rho)\gamma(C,\rho), \quad i=1,\ldots, n$.
\end{lemma}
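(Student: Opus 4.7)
The plan is to expand $\x_i^\top H^{-1}\x_i$ explicitly via the SVD of $X/\sqrt{n}$ given at the top of the subsection, and then read off the bound directly from the definition of $\mu(\rho)$. No inequality is really needed beyond the $\max$ in that definition; the whole statement is essentially a bookkeeping identity combined with the definitional upper bound.

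First, I would extract $\x_i$ from the SVD. Since $\tfrac{1}{\sqrt{n}}X = U\Sigma V^\top$ with $U\in\R^{d\times d}$ orthogonal, $V\in\R^{n\times d}$ with orthonormal columns, and $\Sigma=\mathrm{diag}(\sigma_1,\dots,\sigma_d)$, taking the $i$-th column gives $\x_i = \sqrt{n}\,U\Sigma V^\top e_i$, where $e_i$ is the $i$-th standard basis vector in $\R^n$. Likewise, from $C = U\Sigma^2 U^\top$ we get $H = \rho I + C = U(\rho I + \Sigma^2)U^\top + \rho(I - UU^\top)$, so
\[
H^{-1} = U(\Sigma^2+\rho I)^{-1}U^\top + \rho^{-1}(I-UU^\top).
\]
Since $\x_i$ lies in the column span of $U$, the second term contributes nothing when we sandwich $H^{-1}$ between $\x_i$ on both sides.

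Next, I would compute
\[
\x_i^\top H^{-1}\x_i \;=\; n\,e_i^\top V\Sigma\,(\Sigma^2+\rho I)^{-1}\,\Sigma V^\top e_i \;=\; n\sum_{j=1}^d \frac{\sigma_j^2}{\sigma_j^2+\rho}\,V_{ij}^2 .
\]
Finally, directly from the definition
\[
\mu(\rho) \;=\; \max_{1\le i\le n}\; \frac{n}{\gamma(C,\rho)}\sum_{j=1}^d \frac{\sigma_j^2}{\sigma_j^2+\rho}V_{ij}^2,
\]
each row $i$ satisfies $n\sum_{j}\tfrac{\sigma_j^2}{\sigma_j^2+\rho}V_{ij}^2 \le \mu(\rho)\,\gamma(C,\rho)$, which yields the claim for every $i=1,\dots,n$.

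The only thing that could be considered an ``obstacle'' is confirming that one may work with the compact SVD and still invert $H$ correctly on all of $\R^d$, as well as making sure the normalization of $X$ by $1/\sqrt{n}$ is tracked through cleanly (so that the factor of $n$ in the numerator of $\mu(\rho)$ matches the one produced by $\x_i = \sqrt{n}\,U\Sigma V^\top e_i$). Once these are accounted for, the lemma follows as a one-line comparison with the definition of $\mu(\rho)$.
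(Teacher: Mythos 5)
Your proof is correct and follows essentially the same route as the paper: expand $\x_i$ via the SVD $X=\sqrt{n}\,U\Sigma V^{\top}$, compute $\x_i^{\top}H^{-1}\x_i = n\sum_{j}\frac{\sigma_j^2}{\sigma_j^2+\rho}V_{ij}^2$, and conclude directly from the definition of $\mu(\rho)$. Your extra care about the compact SVD and the $\rho^{-1}(I-UU^{\top})$ term is harmless but unnecessary here, since the paper takes $U\in\R^{d\times d}$ orthogonal so that $H^{-1}=U(\Sigma^2+\rho I)^{-1}U^{\top}$ exactly.
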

\begin{proof}
Noting the SVD of $X=\sqrt{n}U\Sigma V^{\top}$, we have $\x_i = \sqrt{n}U\Sigma V^{\top}_{i,*}$, where $V_{i,*}$ is the $i$-th row of $V$, we have
\begin{align*}
\x_i^{\top}H^{-1}\x_i &=n  V_{i,*}\Sigma U^{\top}U(\Sigma + \rho I)^{-1}U^{\top}U\Sigma V_{i,*}^{\top}\\
& = n V_{i,*}\Sigma(\Sigma + \rho I)^{-1}\Sigma V_{i,*}^{\top} = n \sum_{j=1}^d\frac{\sigma_j^2}{\sigma_j^2 + \rho} V^2_{ij}
\end{align*}
Following the definition of $\mu(\rho)$, we can complete the proof 
\[
\max_{1\leq i\leq n}\x_i^{\top}H^{-1}\x_i\leq \mu(\rho)\gamma(C,\rho)
\]
\end{proof}

The theorem below states  the condition number of the preconditioned problem~(\ref{eqn:np}).
\begin{thm}\label{thm:1}
If $\ell(z,y)$ is a $L$-Lipschitz continuous  function satisfying the condition in Assumption~\ref{ass:1}, then the condition number of the optimization problem in~(\ref{eqn:np}) is bounded by  $\frac{(L+\beta r)^2\mu(\rho)\gamma(C,\rho)}{\beta}$, where $\rho=\lambda/\beta$. If $\ell(z, y)$ is a $L$-smooth function satisfying the condition in Assumption~\ref{ass:1}, then the condition number of~(\ref{eqn:np}) is $\frac{(L-\beta)\mu(\rho)\gamma(C,\rho)}{\beta}$. 
\end{thm}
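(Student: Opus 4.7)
The plan is to read off the two ingredients of the condition number for the preconditioned problem~(\ref{eqn:np}) directly from what has already been established, since the denominator is trivially $\beta$ and the numerator is a chain-rule computation over $\v$ that separates cleanly into a ``functional'' factor (bounded by Lemma~\ref{lem:1}) and a ``data'' factor (bounded by Lemma~\ref{lem:2}).

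First I would observe that the regularizer $\frac{\beta}{2}\|\v\|_2^2$ in~(\ref{eqn:np}) makes the full objective $\beta$-strongly convex in $\v$, so the strong convexity modulus of the preconditioned problem is $\beta$. This fixes the denominator in both parts of the theorem. Next I would compute the gradient/Hessian of an individual preconditioned loss $\phi(\v^{\top}\xh_i,y_i)$, where $\xh_i=H^{-1/2}\x_i$, via the chain rule: $\nabla_\v \phi(\v^\top\xh_i,y_i) = \phi'(\v^\top\xh_i,y_i)\,\xh_i$ and $\nabla^2_\v \phi(\v^\top\xh_i,y_i) = \phi''(\v^\top\xh_i,y_i)\,\xh_i\xh_i^\top$. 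In the Lipschitz case, Lemma~\ref{lem:1} gives $|\phi'(z,y)|\le L+\beta r$ on $|z|\le r$, so the Lipschitz constant of $\phi(\v^\top\xh_i,y_i)$ with respect to $\v$ is at most $(L+\beta r)\|\xh_i\|_2$. In the smooth case, Lemma~\ref{lem:1} gives $|\phi''(z,y)|\le L-\beta$, so the operator norm of the Hessian is at most $(L-\beta)\|\xh_i\|_2^2$, i.e., each summand is $(L-\beta)\|\xh_i\|_2^2$-smooth in $\v$, and by convexity of the smoothness constant under averaging the same bound applies to the full empirical loss.

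Then I would convert the data factor by noting that $\|\xh_i\|_2^2 = \x_i^\top H^{-1}\x_i$, so Lemma~\ref{lem:2} immediately yields $\max_i \|\xh_i\|_2^2 \le \mu(\rho)\gamma(C,\rho)$. Putting the pieces together using the paper's convention $\bar L^2/\lambda$ for the Lipschitz case and $\bar L/\lambda$ for the smooth case, the condition number of~(\ref{eqn:np}) is at most
\[
\frac{\bigl[(L+\beta r)\sqrt{\mu(\rho)\gamma(C,\rho)}\bigr]^2}{\beta} = \frac{(L+\beta r)^2\mu(\rho)\gamma(C,\rho)}{\beta}
\]
and
\[
\frac{(L-\beta)\mu(\rho)\gamma(C,\rho)}{\beta},
\]
respectively, which is exactly the claim. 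There is essentially no hard step: the work is bookkeeping around Lemmas~\ref{lem:1} and~\ref{lem:2}. The only subtlety worth flagging is that Assumption~\ref{ass:1} and Lemma~\ref{lem:1} are stated for $|z|\le r$, so one must implicitly invoke the paper's standing assumption that the first argument of the loss is bounded by $r$ (discussed at the end of the section) when applying the Lipschitz bound to $\phi'(\v^\top\xh_i,y_i)$; apart from that caveat, the derivation is immediate.
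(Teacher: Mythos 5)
Your proposal is correct and matches the paper's own (largely implicit) argument: the theorem is obtained exactly by combining the functional ingredient from Lemma~\ref{lem:1} with the data ingredient $\max_i \x_i^{\top}H^{-1}\x_i \le \mu(\rho)\gamma(C,\rho)$ from Lemma~\ref{lem:2}, the strong convexity modulus $\beta$ of the regularizer in~(\ref{eqn:np}), and the paper's convention $\bar L^2/\lambda$ (Lipschitz case) versus $\bar L/\lambda$ (smooth case). Your caveat about the bound $|z|\le r$ being a standing assumption is also consistent with the paper's treatment (cf.\ Remark 2).
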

Following the above theorem and previous discussions on the condition number, we have the following observations about when {\bf the data preconditioning can reduce the condition number}. 
\begin{observation}\label{cond:2}
\begin{enumerate}
\item 
If $\ell(z, y)$ is a $L$-Lipschitz continuous function and 
\begin{equation}\label{cond:1}
\frac{\lambda (L+\beta r)^2}{\beta L^2}\leq \frac{R^2}{\mu(\rho)\gamma(C,\rho)}   
\end{equation}
where $r$ is the upper bound of predictions $z=\w_t^{\top}\x_i$ during optimization, then the proposed data preconditioning can reduce the condition number. 
\item  If $\ell(z, y)$ is $L$-smooth and 
\begin{equation}\label{cond:2}
 \frac{\lambda}{\beta} - \frac{\lambda}{L}\leq \frac{R^2}{\mu(\rho)\gamma(C,\rho)}
\end{equation}
 then the proposed data preconditioning can reduce the condition number. 
\end{enumerate}
\end{observation}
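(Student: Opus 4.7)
The plan is to read off the condition number of the preconditioned problem (\ref{eqn:np}) by identifying its three ingredients -- the strong convexity modulus of the overall objective, the Lipschitz constant (or smoothness constant) of the composite loss in the new variable $\v$, and the implicit data bound that controls it -- and then putting them together according to the definitions of the condition number recalled in Section~2.

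First, since the regularizer in (\ref{eqn:np}) is $\frac{\beta}{2}\|\v\|_2^2$, the strong convexity modulus of the regularized objective in the preconditioned variable $\v$ is $\beta$. This replaces the role previously played by $\lambda$ in the original formulation. Second, I would invoke Lemma~\ref{lem:1} to translate the regularity of the scalar loss $\ell(\cdot,y)$ into that of $\phi(\cdot,y)=\ell(\cdot,y)-\frac{\beta}{2}(\cdot)^2$: in the Lipschitz case, $\phi(\cdot,y)$ is $(L+\beta r)$-Lipschitz on the range $|z|\le r$; in the smooth case, $\phi(\cdot,y)$ is $(L-\beta)$-smooth.

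Third, I would push this scalar regularity through the linear map $\v\mapsto \v^\top H^{-1/2}\x_i$. By the chain rule, $\nabla_\v\phi(\v^\top H^{-1/2}\x_i,y_i)=\phi'(\cdot,y_i)\,H^{-1/2}\x_i$, so the Lipschitz constant of the composite individual loss in $\v$ is at most $(L+\beta r)\|H^{-1/2}\x_i\|_2$, and its squared norm is at most $(L+\beta r)^2\,\x_i^\top H^{-1}\x_i$. In the smooth case, the Hessian is $\phi''(\cdot,y_i)\,H^{-1/2}\x_i\x_i^\top H^{-1/2}$, whose spectral norm is at most $(L-\beta)\,\x_i^\top H^{-1}\x_i$. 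In both cases, Lemma~\ref{lem:2} uniformly bounds $\x_i^\top H^{-1}\x_i\le \mu(\rho)\gamma(C,\rho)$ with $\rho=\lambda/\beta$, so the ``data ingredient'' $R^2$ of the preconditioned problem gets replaced by $\mu(\rho)\gamma(C,\rho)$.

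Finally, I would combine the pieces using the definitions reviewed at the start of Section~2: the condition number for a Lipschitz continuous scalar loss is (squared vector Lipschitz constant)/(strong convexity modulus), giving $(L+\beta r)^2\mu(\rho)\gamma(C,\rho)/\beta$, while for a smooth scalar loss it is (vector smoothness constant)/(strong convexity modulus), giving $(L-\beta)\mu(\rho)\gamma(C,\rho)/\beta$. The only nontrivial ingredient is the interplay between the scalar-level regularity of $\phi$ and the data preconditioner in the vector-level bound; once Lemmas~\ref{lem:1} and~\ref{lem:2} are in hand, the rest is just a careful bookkeeping of constants and recalling what ``condition number'' means in each regime, so I do not anticipate a substantive obstacle.
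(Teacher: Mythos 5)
Your plan correctly reconstructs the condition number of the preconditioned problem~(\ref{eqn:np}): the strong convexity modulus $\beta$ from the new regularizer, the functional ingredient $(L+\beta r)$ resp.\ $(L-\beta)$ from Lemma~\ref{lem:1}, and the data ingredient $\x_i^{\top}H^{-1}\x_i\leq \mu(\rho)\gamma(C,\rho)$ from Lemma~\ref{lem:2}, pushed through the linear map $\v\mapsto \v^{\top}H^{-1/2}\x_i$. This is exactly the content and the route of Theorem~\ref{thm:1}, which is also how the paper gets there.

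However, the proposal stops precisely where the Observation begins, and that last step is what the statement actually asserts. The Observation is a \emph{comparison}: the inequalities~(\ref{cond:1}) and~(\ref{cond:2}) are, by construction, the rearranged statements that the preconditioned condition number does not exceed the original one, where the original condition number (identified in Section~2) is $L^2R^2/\lambda$ for an $L$-Lipschitz loss and $LR^2/\lambda$ for an $L$-smooth loss. Your argument never invokes these baseline quantities, so the hypotheses~(\ref{cond:1}) and~(\ref{cond:2}) never enter and the claimed conclusion (``the condition number is reduced'') is never reached. The repair is one line of algebra: requiring $(L+\beta r)^2\mu(\rho)\gamma(C,\rho)/\beta \leq L^2R^2/\lambda$ is equivalent to $\frac{\lambda (L+\beta r)^2}{\beta L^2}\leq \frac{R^2}{\mu(\rho)\gamma(C,\rho)}$, i.e.~(\ref{cond:1}); and requiring $(L-\beta)\mu(\rho)\gamma(C,\rho)/\beta \leq LR^2/\lambda$ is equivalent to $\frac{\lambda(L-\beta)}{\beta L}=\frac{\lambda}{\beta}-\frac{\lambda}{L}\leq \frac{R^2}{\mu(\rho)\gamma(C,\rho)}$, i.e.~(\ref{cond:2}). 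With that comparison added (and the caveat, as in the paper, that the Lipschitz-case bound on $\phi$ only holds on the range $|z|\leq r$ of predictions encountered during optimization), your proof matches the paper's.
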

\paragraph{Remark 1: } In the above conditions~((\ref{cond:1}) and (\ref{cond:2})), we make explicit  the effect from the loss function and the data. In the right hand side,  the quantity $R^2/\mu(\rho)\gamma(C,\rho)$ measures the ratio between the maximum norm of the original data and that of the preconditioned data.  The left hand side depends on the property of the loss function and the value of $\lambda$. Due to the unknown value of $r$ for non-smooth optimization, we first discuss the indications of the condition for the smooth loss function and comment on the value of $r$ in Remark 2.  Let us consider $\beta, L\approx \Theta(1)$ (e.g. in ridge regression or regularized least square classification) and $\lambda =\Theta(1/n)$. Therefore $\rho=\lambda/\beta=\Theta(1/n)$. The condition in~(\ref{cond:2}) for the smooth loss requires the ratio between the maximum norm of the original data and that of the preconditioned data is larger than $\Theta(1/n)$. If the eigenvalues of the covariance matrix follow an {\it exponential decay}, then $\gamma(C,\rho)=\Theta(1)$ and the condition indicates that 
\[
\mu(\rho)\leq \Theta(n R^2),
\] which can be satisfied easily if $R>1$ due to the fact $\mu(\rho)\leq n$. If the eigenvalues follow a {\it polynomial decay} $i^{-2\tau}, \tau\geq 1/2$, then $\gamma(C,\rho)\leq O(\rho^{-1/(2\tau)}) = O(n^{1/(2\tau)})$, then the condition indicates that 
\[
\mu(\rho)\leq O(n^{1-\frac{1}{2\tau}}R^2),
\] which means the faster the decay of the eigenvalues, the easier for the condition to be satisfied.  Actually, several previous works~\cite{DBLP:conf/uai/TalwalkarR10,DBLP:journals/corr/abs-1303-1849,DBLP:conf/nips/YangJ14} have studied the coherence measure  and demonstrated that it is not rare to have  a small coherence measure for real data sets, making the above inequality easily satisfied. 

If $\beta$ is a small value (e.g., in logistic regression), then the satisfaction of the condition depends on the balance between the factors $\lambda, L, \beta, \gamma(C,\rho), \mu(\rho), R^2$. In practice, if $\beta, L$ is known we can always check the condition by calculating  the ratio between the maximum norm of the original data and that of the preconditioned data and comparing it with $\lambda/\beta - \lambda/L$. If $\beta$ is unknown, we can  take a trial and error method by tuning  $\beta$ to achieve the best performance. 

\paragraph{Remark 2:} Next, we comment on the value of $r$ for non-smooth optimization. It was shown in~\cite{DBLP:journals/mp/Shalev-ShwartzSSC11} the optimal solution $\w_*$ to~(\ref{eqn:p}) can be bounded by $\|\w_*\|\leq O(\frac{1}{\sqrt{\lambda}})$. Theoretically we can ensure $|z|=|\w^{\top}\x|\leq {R}/{\sqrt{\lambda}}$ and thus $r^2\leq R^2/\lambda$. In the worse case $r^2={R^2}/{\lambda}$, the condition number of the preconditioned problem for non-smooth optimization is bounded by $O\left(\left(\frac{L^2}{\beta}+\frac{R^2}{\lambda\beta}\right)\mu(\rho)\gamma(C,\rho)\right)$. Compared to the original condition number $L^2R^2/\lambda$,  there may be no improvement for convergence. In practice, $\|\w_*\|_2$ could be much less than $1/\sqrt{\lambda}$ and therefore $r< R/\sqrt{\lambda}$, especially when $\lambda$ is very small. On the other hand, when $\lambda$ is too small the step sizes $1/(\lambda t)$ of SGD on the original problem at the beginning of iterations are extremely large, making the optimization unstable. This issue can be mitigated or eliminated by data preconditioning. 

\paragraph{Remark 3:} We can also analyze the straightforward approach by solving the preconditioned problem in~(\ref{eqn:P1})  using $P^{-1}=H^{-1/2}$. Then the problem becomes:
\begin{align}\label{eqn:P1H}
\min_{\u\in\R^d} \frac{1}{n}\sum_{i=1}^n\ell(\u^{\top}H^{-1/2}\x_i, y_i) + \frac{\lambda}{2}\u^{\top}H^{-1}\u,
\end{align}

The bound of the data ingredient follows the same analysis. The functional ingredient is $\tilde O\left(\frac{L (\sigma_1^2 + \rho)}{\lambda}\right)$ due to that $\lambda \u^{\top}H^{-1}\u\geq \lambda/(\sigma_1^2 + \rho)\|\u\|_2^2$. If $\lambda\ll\sigma_1^2$, then the condition number of the preconditioned problem still heavily depends on $1/\lambda$. Therefore, solving the naive preconditioned problem~(\ref{eqn:P1}) with $P^{-1}=H^{-1/2}$ may not boost the convergence, which is also verified in Section~\ref{sec:exp}  by experiments. 

\paragraph{Remark 4:} Finally, we use the example of SAG for solving least square regression to demonstrate the benefit of data preconditioning. Similar analysis carries on to other variance reduced stochastic optimization algorithms~\cite{NIPS2013_4937,DBLP:journals/jmlr/Shalev-Shwartz013}. When $\lambda=1/n$ the iteration complexity  of SAG would be dominated by $O(R^2n\log(1/\epsilon))$~\cite{DBLP:journals/corr/SchmidtRB13} -- tens of  epochs 
depending on the value of $R^2$. However, after data preconditioning the iteration complexity becomes $O(n\log(1/\epsilon))$ if $n\geq\hat R^2$, where $\hat R$ is the upper bound of the preconditioned data, which would be just few epochs. In comparison, Bach and Moulines' algorithm~\cite{DBLP:conf/nips/BachM13} suffers from an $O(\frac{d+R^2}{\epsilon})$ iteration complexity that could be much larger than $O(n\log(1/\epsilon))$, especially when required $\epsilon$ is small and $R$ is large.  Our empirical studies in Section~\ref{sec:exp} indeed verify these results. 



\subsection{Efficient Data Preconditioning}\label{sec:imp}
Now we proceed to address the third question, i.e., how to efficiently compute the preconditioned data. 
The data preconditioning using $H^{-1/2}$ needs to compute the square root inverse of $H$ times $\x$, which usually costs a time complexity of $O(d^3)$.  On the other hand, the computation of the preconditioned data for least square regression is as expensive as computing the closed form solution, which makes data preconditioning not attractive, especially for high-dimensional data. 
In this section, we analyze an efficient data preconditioning by random sampling. As a compromise, we might lose some gain in convergence.  
The key idea is to construct the preconditioner by sampling a subset of $m$ training data, denoted by $\widehat\D=\{\xh_1, \ldots, \xh_m\}$. Then we construct new loss functions for individual data as, 
\begin{align*}
&\psi(\w^{\top}\x_i, y_i) =\left\{\begin{array}{l}\ell(\w^{\top}\x_i, y_i) - \frac{\beta}{2}(\w^{\top}\x_i)^2,\:\text{ if } \x_i\in\widehat\D\\
\\
\ell(\w^{\top}\x_i, y_i),\:\text{ otherwise }\end{array}\right.
\end{align*}
We define $\hat\beta$ and $\hat\rho$ as 
\begin{equation}\label{eqn:rho}
\hat\beta = \frac{m}{n}\beta, \quad\quad \hat\rho = \frac{n}{m}\rho = \frac{n\lambda}{m\beta} = \frac{\lambda}{\hat\beta}
\end{equation}
Then we can show that the original problem is equivalent to 
 \begin{align}\label{eqn:np-2}
\min_{\v\in\R^d} \frac{1}{n}\sum_{i=1}^n\psi(\v^{\top}\Hh^{-1/2}\x_i, y_i)& + \frac{\hat\beta}{2}\|\v\|_2^2.
\end{align} 
where $\Hh = \hat\rho I + \frac{1}{m}\sum_{i=1}^m \xh_i \xh_i^{\top}$. Thus, $\Hh^{-1/2}\x_i$ defines the new preconditioned data. 
Below we show  how to efficiently  compute $\Hh^{-1}\x$. Let $\frac{1}{\sqrt{m}}\hat X = \hat U \hat\Sigma \hat V^{\top}$ be the SVD of $\hat X=(\xh_1,\ldots, \xh_m)$, where $\hat U\in\R^{d\times m}, \hat\Sigma=diag(\hat\sigma_1,\ldots,\hat\sigma_m)$. Then with simple algebra $\Hh^{-1/2}$ can be written as 
\begin{align*}
\Hh^{-1/2}&=(\hat\rho I  + \hat U\hat\Sigma^2\hat U^{\top})^{-1/2}= \hat\rho^{-1/2}I - \hat U \hat S\hat U^{\top},
\end{align*}
where $\hat S=diag(\hat s_1,\ldots, \hat s_m)$ and $\hat s_i =\hat \rho^{-1/2} - (\hat\sigma^2_i + \hat\rho)^{-1/2}$. 
Then the preconditioned data $\Hh^{-1/2}\x_i$ can be calculated by  $
\Hh^{-1/2}\x_i = \hat\rho^{-1/2}\x_i - \hat U(\hat S(\hat U^{\top}\x_i))$,  
which costs $O(md)$ time complexity. Additionally, the time complexity  for computing the SVD of $\hat X$ is $O(m^2d)$. Compared with the preconditioning with full data, the above procedure of preconditioning is much more efficient.  Moreover,  the calculation  of the preconditioned data given the SVD of $\hat X$ can be carried out on multiple machines to make the computational overhead as minimal as possible.

It is worth noting that the random sampling approach has been used previously to construct the stochastic Hessian~\cite{conf/icml/Martens10,journals/siamjo/ByrdCNN11}. Here, we analyze its impact on  the condition number. 
The same analysis about the Lipschitz constant of the loss function carries over to $\psi(z,y)$, except that $\psi(z,y)$ is  at most $L$-smooth if $\ell(z,y)$ is  $L$-smooth. The following theorem allows us to bound the norm of the preconditioned data using $\Hh$. 
\begin{thm}\label{thm:key}
Let $\hat\rho$ be defined in~(\ref{eqn:rho}). For any $\delta\leq 1/2$, If 
\[
m \geq \frac{2}{\delta^2}(\mu(\hat\rho)\gamma(C, \hat\rho) + 1)(t + \log d),
\] then with a probability $1 - e^{-t}$, we have
\begin{align*}
\x_i^{\top}\Hh^{-1}\x_i \leq(1+2\delta)\mu(\hat\rho)\gamma(C,\hat\rho), \forall i=1,\ldots, n
\end{align*}
\end{thm}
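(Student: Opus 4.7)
The plan is to reduce Theorem~\ref{thm:key} to a spectral approximation statement of the form $\Hh \succeq (1-\delta) H_{\hat\rho}$, where $H_{\hat\rho} = \hat\rho I + C$ is the ``expected'' preconditioner that would arise from using the whole data set with the rescaled regularizer $\hat\rho$. The key observation is that because the sampled $\hat\x_i$'s are drawn from the empirical distribution on $\{\x_1,\dots,\x_n\}$, we have $\E[\hat\x_i\hat\x_i^{\top}]=C$ and hence $\E[\Hh]=H_{\hat\rho}$. By Lemma~\ref{lem:2} applied with regularization parameter $\hat\rho$ instead of $\rho$, each $\x_j$ (and hence each realization of $\hat\x_i$) satisfies $\x_j^{\top}H_{\hat\rho}^{-1}\x_j\le\mu(\hat\rho)\gamma(C,\hat\rho)$. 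So if we can show $\Hh^{-1}\preceq(1+2\delta)H_{\hat\rho}^{-1}$, sandwiching with $\x_i$ on both sides gives the theorem.

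The spectral approximation step is clean: if $\|H_{\hat\rho}^{-1/2}(\Hh-H_{\hat\rho})H_{\hat\rho}^{-1/2}\|_2\le\delta$, then $H_{\hat\rho}^{-1/2}\Hh H_{\hat\rho}^{-1/2}\succeq(1-\delta)I$, which rearranges to $\Hh^{-1}\preceq\frac{1}{1-\delta}H_{\hat\rho}^{-1}\preceq(1+2\delta)H_{\hat\rho}^{-1}$ whenever $\delta\le 1/2$ (using $1/(1-\delta)\le 1+2\delta$ on that range). So the whole proof reduces to a tail bound on $\bigl\|\tfrac{1}{m}\sum_{i=1}^m Y_i\bigr\|_2$, where
\begin{equation*}
Y_i \;=\; H_{\hat\rho}^{-1/2}\bigl(\hat\x_i\hat\x_i^{\top}-C\bigr)H_{\hat\rho}^{-1/2}.
\end{equation*}

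The main technical step is a matrix Bernstein inequality applied to the i.i.d., zero-mean, symmetric matrices $Y_i$. The two ingredients needed are the almost-sure operator-norm bound and the matrix variance. For the norm, $\|H_{\hat\rho}^{-1/2}\hat\x_i\hat\x_i^{\top}H_{\hat\rho}^{-1/2}\|_2=\hat\x_i^{\top}H_{\hat\rho}^{-1}\hat\x_i\le\mu(\hat\rho)\gamma(C,\hat\rho)$ by Lemma~\ref{lem:2}, and $\|H_{\hat\rho}^{-1/2}CH_{\hat\rho}^{-1/2}\|_2\le 1$ because $C\preceq H_{\hat\rho}$; the triangle inequality then yields $\|Y_i\|_2\le\mu(\hat\rho)\gamma(C,\hat\rho)+1$. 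For the variance, $\E[Y_i^2]\preceq \E[A_i^2]$ with $A_i=H_{\hat\rho}^{-1/2}\hat\x_i\hat\x_i^{\top}H_{\hat\rho}^{-1/2}$, and using $A_i^2=(\hat\x_i^{\top}H_{\hat\rho}^{-1}\hat\x_i)A_i\preceq\mu(\hat\rho)\gamma(C,\hat\rho)A_i$ in the PSD order gives $\|\E[Y_i^2]\|_2\le\mu(\hat\rho)\gamma(C,\hat\rho)\cdot\|\E[A_i]\|_2\le\mu(\hat\rho)\gamma(C,\hat\rho)$.

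Plugging these into matrix Bernstein produces $\Pr\bigl[\|\tfrac1m\sum_i Y_i\|_2\ge\delta\bigr]\le d\exp\bigl(-m\delta^2/(2(\sigma^2+R\delta/3))\bigr)$, and with $\sigma^2+R\delta/3\le\mu(\hat\rho)\gamma(C,\hat\rho)+1$ for $\delta\le 1/2$, the hypothesized sample size $m\ge\tfrac{2}{\delta^2}(\mu(\hat\rho)\gamma(C,\hat\rho)+1)(t+\log d)$ drives the failure probability below $e^{-t}$. The main obstacle I expect is keeping the constants and the $R\delta/3$ term clean enough to land exactly on the stated bound $2/\delta^2$ (rather than $2/\delta^2+O(1/\delta)$), which just requires restricting to $\delta\le 1/2$ as the theorem already does; everything else is routine from Lemma~\ref{lem:2} and the PSD monotonicity argument above.
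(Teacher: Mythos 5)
Your overall architecture is exactly the paper's: both arguments reduce the claim to showing $\lambda_{\min}\left(H^{-1/2}\Hh H^{-1/2}\right)\geq 1-\delta$ with $H=\hat\rho I+C$, then combine Lemma~\ref{lem:2} applied at $\hat\rho$ with the elementary bound $1/(1-\delta)\leq 1+2\delta$ for $\delta\leq 1/2$. The only real difference is the concentration tool, and that is where your proposal has a genuine gap. With matrix Bernstein applied to the centered matrices $Y_i$, the denominator of the exponent is $\sigma^2+R\delta/3$ with $\sigma^2\leq\mu(\hat\rho)\gamma(C,\hat\rho)$ and $R\leq\mu(\hat\rho)\gamma(C,\hat\rho)+1$; your claim that this is at most $\mu(\hat\rho)\gamma(C,\hat\rho)+1$ for $\delta\leq 1/2$ requires $(\mu(\hat\rho)\gamma(C,\hat\rho)+1)/6\leq 1$, i.e.\ $\mu(\hat\rho)\gamma(C,\hat\rho)\leq 5$, which fails in precisely the regime of interest (the numerical rank times the coherence is typically much larger). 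In general the denominator can be as large as $\tfrac{7}{6}\left(\mu(\hat\rho)\gamma(C,\hat\rho)+1\right)$, so with the stated sample size $m\geq\tfrac{2}{\delta^2}(\mu(\hat\rho)\gamma(C,\hat\rho)+1)(t+\log d)$ the Bernstein route only yields a failure probability of order $d^{1/7}e^{-6t/7}$, not $e^{-t}$; restricting to $\delta\leq 1/2$ does not repair this, only a larger constant (roughly $7/3$ in place of $2$) would.

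The paper lands exactly on the constant $2/\delta^2$ by using a different tail bound: the lower-tail matrix Chernoff inequality for sums of PSD matrices sampled without replacement (Tropp), applied to $X_i=H^{-1/2}\left(\x_i\x_i^{\top}+\hat\rho I\right)H^{-1/2}$, so that $\E[X_i]=I$, $\frac{1}{m}\sum_i X_i=H^{-1/2}\Hh H^{-1/2}$, and $\lambda_{\max}(X_i)\leq\mu(\hat\rho)\gamma(C,\hat\rho)+1$. The lower tail there is purely sub-Gaussian: the exponent is $\frac{m}{\mu(\hat\rho)\gamma(C,\hat\rho)+1}\left[(1-\delta)\log(1-\delta)+\delta\right]\geq\frac{m\delta^2}{2(\mu(\hat\rho)\gamma(C,\hat\rho)+1)}$, with no linear-in-$\delta$ deviation term, and this is what produces the factor $2$. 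To salvage your version, either accept a slightly worse constant in $m$, or observe that for the lower tail of a sum of PSD matrices the $R\delta/3$ term can be dropped and invoke that sharper bound; also note that the subsample is drawn without replacement, so whichever inequality you use should be its without-replacement variant (your i.i.d.\ sampling assumption is a second, minor, mismatch with the setting of the theorem).
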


The proof of the theorem is presented in Appendix B. The theorem indicates that the upper bound of the preconditioned data is only scaled up by a small constant factor with an overwhelming probability compared to that using  all data points to construct the preconditioner under moderate conditions when the data matrix $X$ has a low coherence. 
 Before ending this section, we present a similar theorem to Theorem \ref{thm:1} for using the efficient data preconditioning. 
\begin{thm}\label{thm:3}
If $\ell(z,y)$ is a $L$-Lipschitz continuous  function satisfying the condition in Assumption~\ref{ass:1}, then the condition number of the optimization problem in~(\ref{eqn:np-2}) is bounded by  $\frac{(L+\beta r)^2\mu(\hat\rho)\gamma(C,\hat\rho)}{\hat\beta}$. If $\ell(z, y)$ is a $L$-smooth function satisfying the condition in Assumption~\ref{ass:1}, then the condition number of~(\ref{eqn:np-2}) is $\frac{L\mu(\hat\rho)\gamma(C,\hat\rho)}{\hat\beta}$. 
\end{thm}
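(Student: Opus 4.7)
The plan is to reuse the three-ingredient decomposition employed in Theorem~\ref{thm:1}: the condition number of~(\ref{eqn:np-2}) is (Lipschitz constant or smoothness of $\psi$) times (squared radius of the preconditioned data) divided by (strong convexity modulus of the regularizer). The regularizer $(\hat\beta/2)\|\v\|_2^2$ is $\hat\beta$-strongly convex by construction, which plants the denominator.

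Next I would bound the Lipschitz/smoothness constant of $\psi(z,y)$. On the sampled subset $\widehat\D$, $\psi$ coincides with $\phi(z,y)=\ell(z,y)-\frac{\beta}{2}z^2$, and on the complement $\psi$ coincides with $\ell$. Applying Lemma~\ref{lem:1} to the sampled part gives $(L+\beta r)$-Lipschitzness for $|z|\le r$ and $(L-\beta)$-smoothness; on the complement we still have $L$-Lipschitzness and $L$-smoothness from $\ell$ itself. Taking the worst case yields: (i) if $\ell$ is $L$-Lipschitz, then $\psi$ is $(L+\beta r)$-Lipschitz, and (ii) if $\ell$ is $L$-smooth, then $\psi$ is at most $L$-smooth (the non-sampled pieces retain the full $L$, so one cannot improve to $L-\beta$ as in the full-data preconditioning). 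This explains the asymmetry between Theorem~\ref{thm:1} and Theorem~\ref{thm:3} in the smooth case.

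Finally, for the squared data radius I would invoke Theorem~\ref{thm:key}: with probability $1-e^{-t}$, $\x_i^\top \Hh^{-1}\x_i \le (1+2\delta)\mu(\hat\rho)\gamma(C,\hat\rho)$ for all $i$, so the preconditioned data norm satisfies $\|\Hh^{-1/2}\x_i\|_2^2 \le \mu(\hat\rho)\gamma(C,\hat\rho)$ up to a constant that can be absorbed. Assembling the three ingredients yields
\[
\text{cond-num} \;\le\; \frac{(L+\beta r)^2\,\mu(\hat\rho)\gamma(C,\hat\rho)}{\hat\beta}
\quad\text{or}\quad
\frac{L\,\mu(\hat\rho)\gamma(C,\hat\rho)}{\hat\beta},
\]
for the Lipschitz and smooth case, respectively, which is exactly the statement of the theorem.

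The only non-routine step is the Lipschitz/smoothness accounting for $\psi$: one has to notice that because only the $m$ sampled losses are shifted by $-\frac{\beta}{2}z^2$ while the remaining $n-m$ losses are left untouched, the global smoothness parameter cannot drop below $L$. Everything else — strong convexity from the explicit $\hat\beta$-regularizer, and the data-norm bound — is directly supplied by the construction of~(\ref{eqn:np-2}) and by Theorem~\ref{thm:key}, so no further calculation is required.
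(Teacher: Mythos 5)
Your proposal is correct and follows essentially the same route as the paper: the paper obtains Theorem~\ref{thm:3} by exactly this three-ingredient accounting --- the $\hat\beta$-strong convexity of the explicit regularizer, the observation stated just before the theorem that the analysis of Lemma~\ref{lem:1} carries over to $\psi$ except that $\psi$ is only at most $L$-smooth (since the $n-m$ unsampled losses are not shifted), and the data-norm bound from Theorem~\ref{thm:key}. Your remark that the $(1+2\delta)$ factor is absorbed into the stated bound matches the paper's own treatment, so there is nothing to add.
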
 
Thus, similar conditions can be established for the data preconditioning using $\Hh^{-1/2}$ to improve the convergence rate. Moreover, varying $m$ may exhibit a tradeoff between the two ingredients  understood as follows. Suppose the incoherence measure $\mu(\rho)$ is bounded by a constant. Since $\gamma(C,\hat\rho)$ is a monotonically decreasing function w.r.t $\hat\rho$, therefore $\gamma(C,\hat\rho)$ and the data ingredient $\x_i^{\top}\Hh^{-1}\x_i$ may increase as $m$ increases. On the other hand, the functional ingredient $L/\hat\beta$ would decrease as $m$ increases. 

\section{Experiments}\label{sec:exp}
\subsection{Synthetic Data}  We first present some simulation results to verify our theory. To control the inherent data properties (i.e, numerical rank and incoherence), we generate synthetic data. We first generate a standard Gaussian matrix $M\in\R^{d\times n}$ and then compute its SVD $M=USV^{\top}$. We use $U$ and $V$ as the left and right singular vectors to construct the data matrix $X\in\R^{d\times n}$. In this way, the incoherence measure of $V$ is a small constant (around $5$). We generate eigenvalues of $C$ following a polynomial decay $\sigma^2_i=i^{-2\tau}$ (poly-$\tau$) and an exponential decay $\sigma^2_i =\exp(-\tau i)$. Then we construct the data matrix $X=\sqrt{n}U\Sigma V ^{\top}$, where $\Sigma=diag(\sigma_1,\cdots, \sigma_d)$.

We first plot the condition number for the problem in ~(\ref{eqn:p}) and its data preconditioned problem  in~(\ref{eqn:np}) using $H^{-1/2}$ by assuming the Lipschitz constant $L=1$, varying the decay of the eigenvalues of the sample covariance matrix, and varying  the values of $\beta$ and  $\lambda$. To this end, we generate a synthetic data with $n=10^{5}, d=100$. The curves in Figure~\ref{fig:cond1} show the condition number vs the values of $\beta$ by varying the decay of the eigenvalues. It indicates that the data preconditioning can reduce the condition number for a broad range of values of $\beta$, the strong convexity modulus of the scalar loss function. The curves in Figure~\ref{fig:cond2} show a similar pattern of  the condition number vs the values of $\lambda$ by varying the decay of the eigenvalues. It also exhibits that the smaller the $\lambda$ the larger reduction in the condition number. 

\begin{figure}[t]\label{fig:cond}
\centering
\subfigure[fix $\lambda=10^{-5}, L=1$]{\label{fig:cond1}\includegraphics[scale=0.25]{{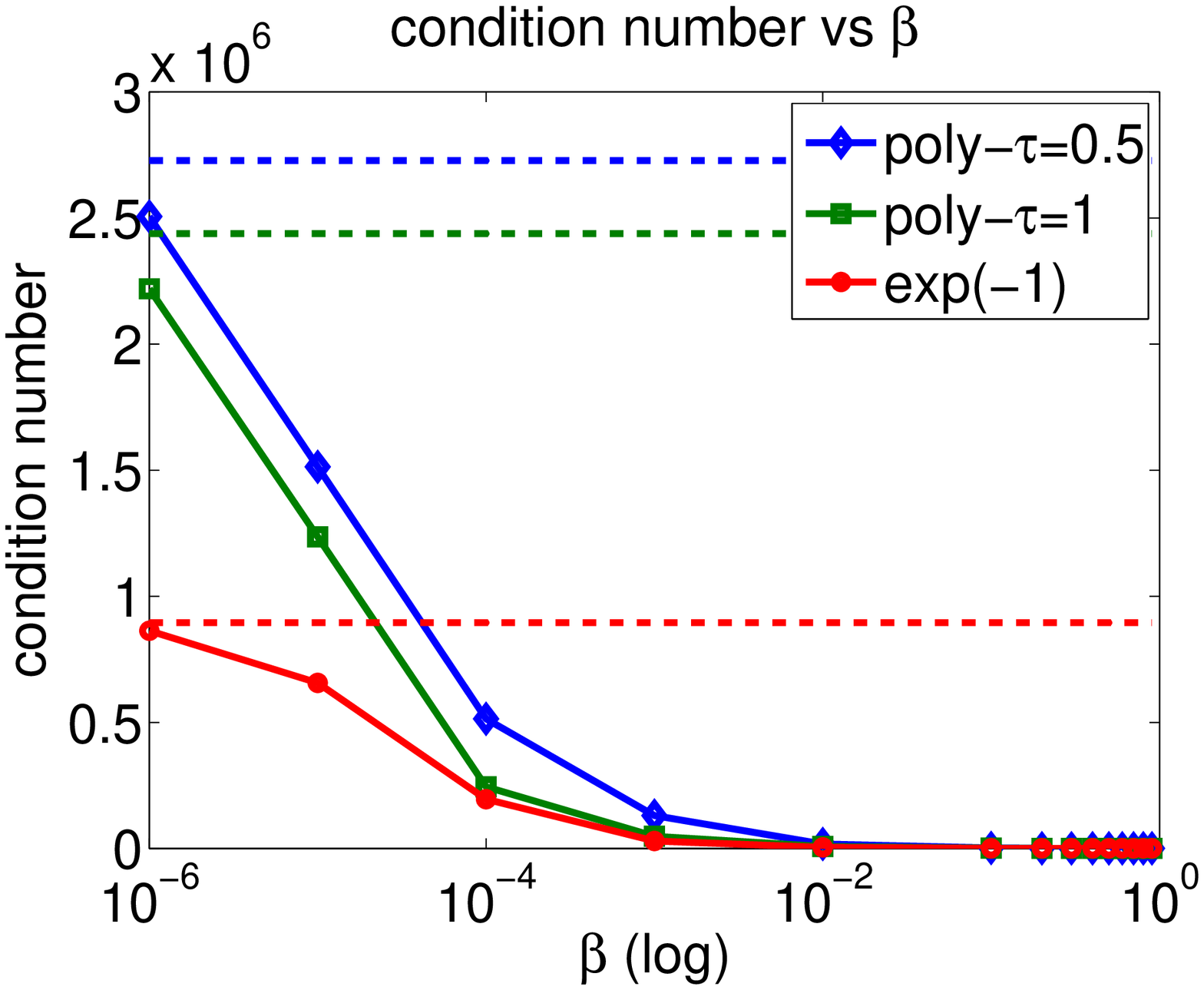}}}
\subfigure[fix $\beta=10^{-3}, L=1$]{\label{fig:cond2}\includegraphics[scale=0.25]{{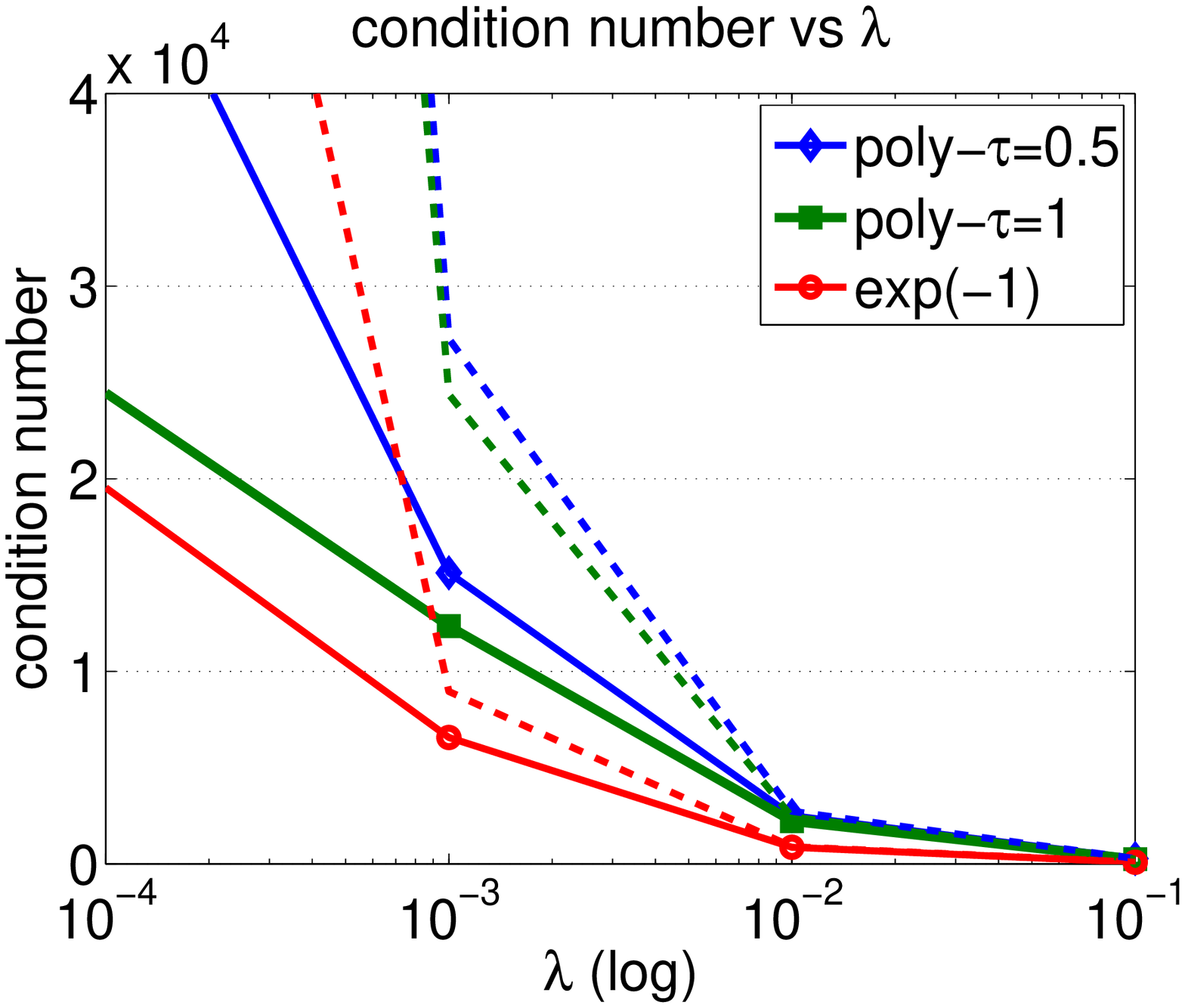}}}

\caption{Synthetic data: (a) compares the condition number of the preconditioned problem (solid lines) with that of  the original problem (dashed lines of the same color) by varying the value of $\beta$ (a property of the loss function) and varying the decay of the eigenvalues of the sample covariance matrix (a property of the data); (b) compares the condition number by varying the value of $\lambda$ (measuring the difficulty  of the problem) and varying the decay of the eigenvalues. 
} 
\end{figure}

\begin{figure}[t]
\centering
\subfigure[poly-$\tau$ ($0.5$), $\beta=0.99$]{\label{fig:2a}\includegraphics[scale=0.25]{{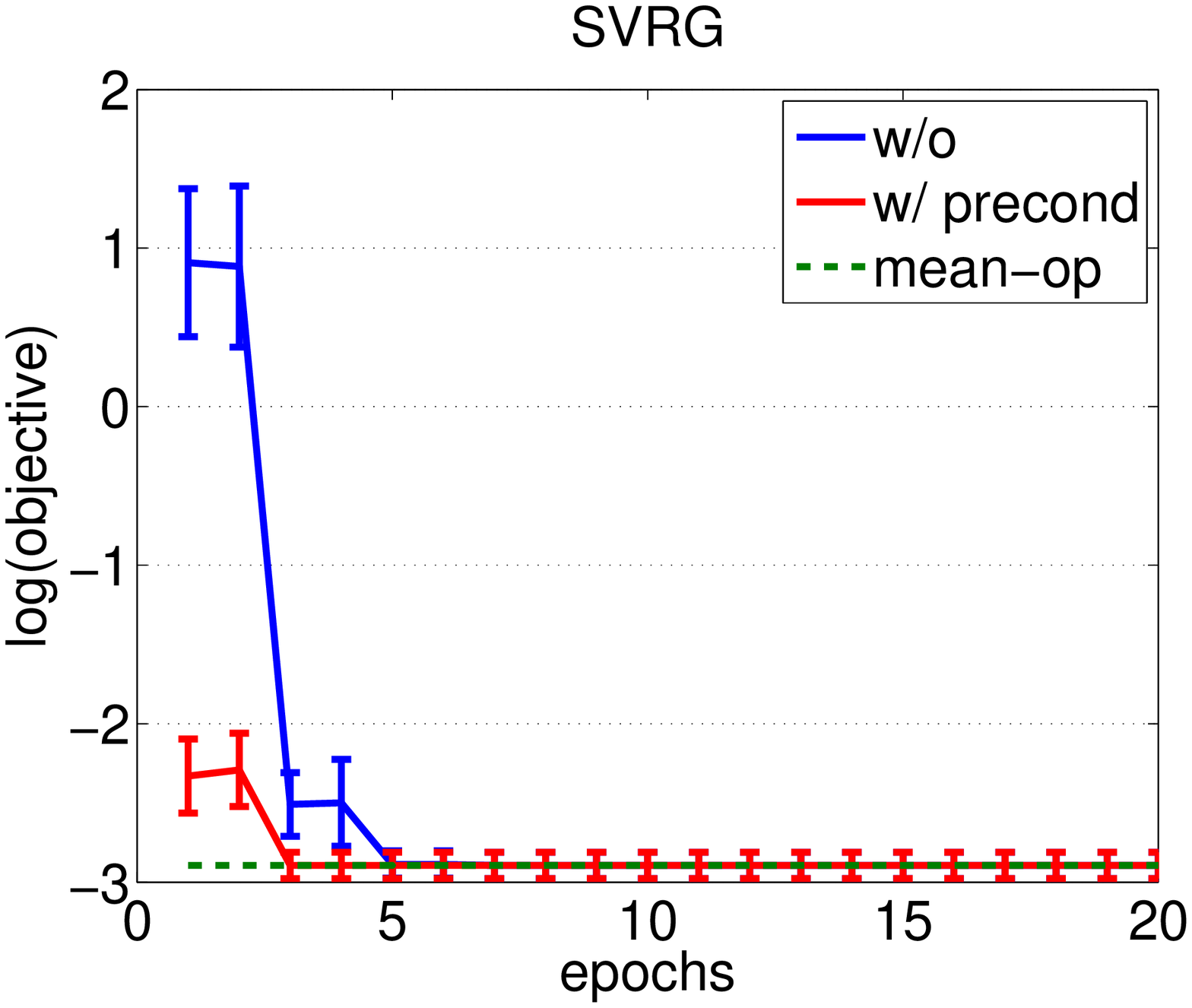}}}
\hspace*{0.15in}
\subfigure[poly-$\tau$ ($0.5$), $\beta=0.99$]{\includegraphics[scale=0.25]{{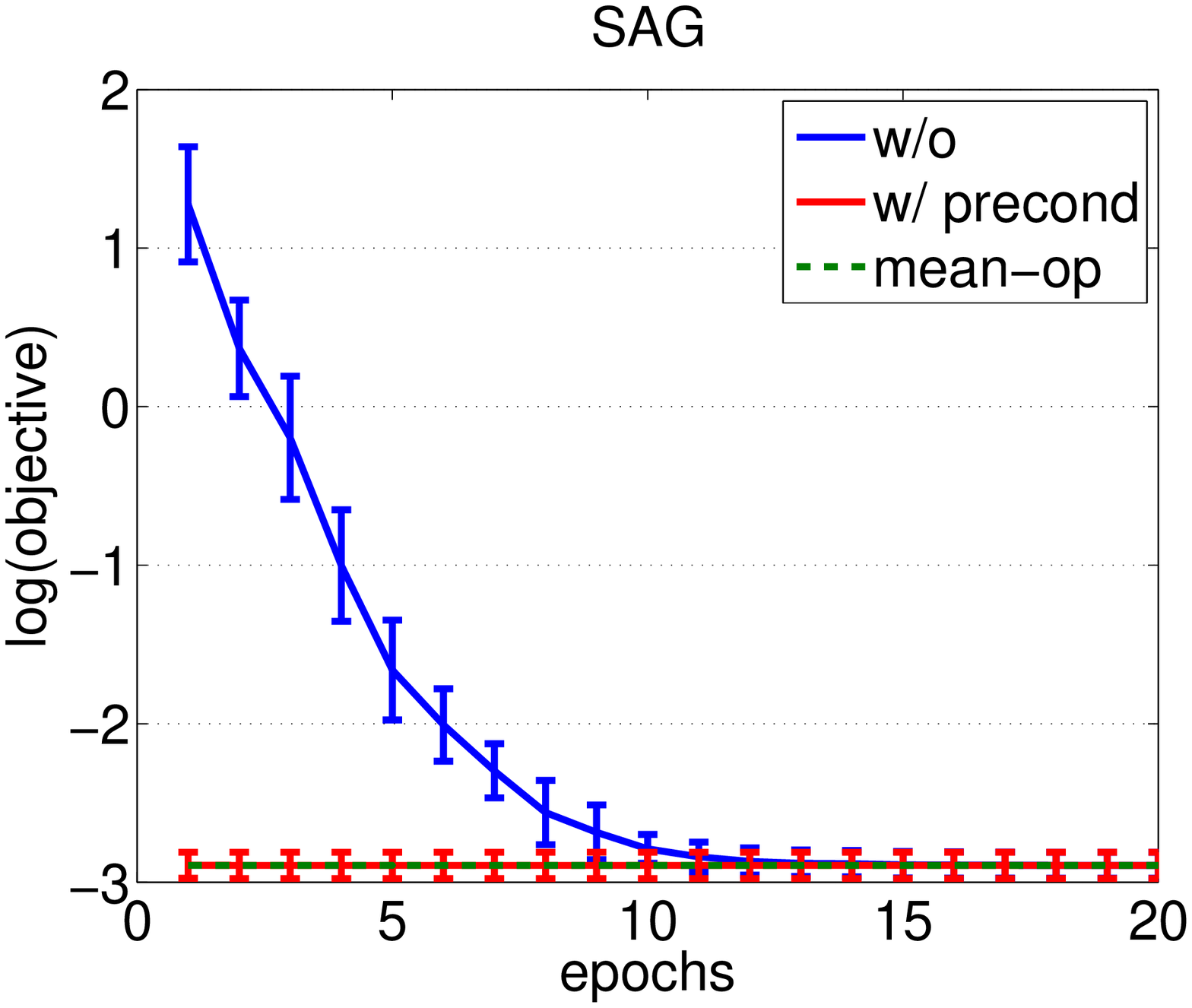}}}

\subfigure[poly-$\tau$ ($0.5$), $\lambda=10^{-5}$]{\includegraphics[scale=0.25]{{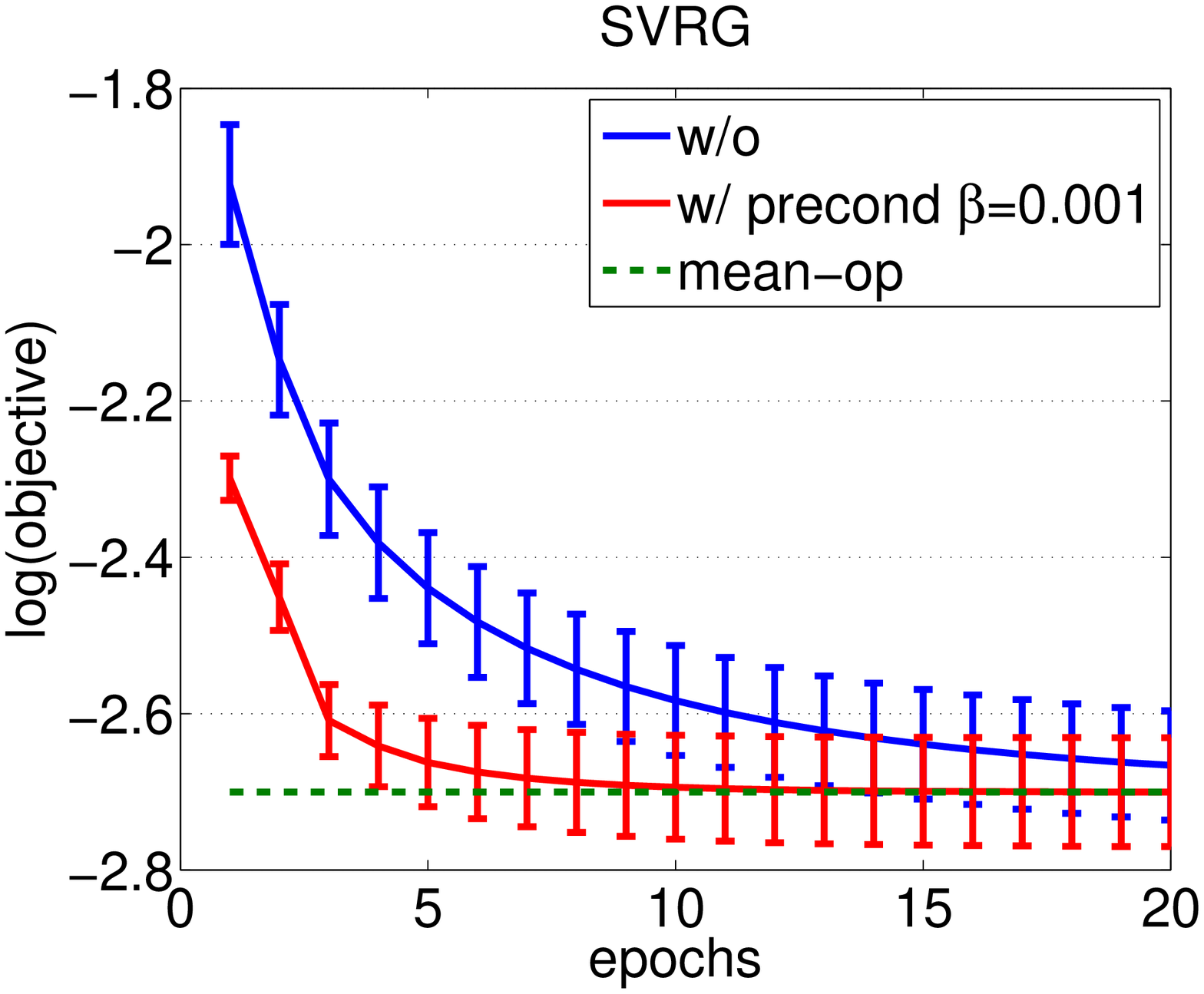}}} \hspace*{0.15in}
\subfigure[poly-$\tau$ ($0.5$), $\lambda=10^{-5}$]{\includegraphics[scale=0.25]{{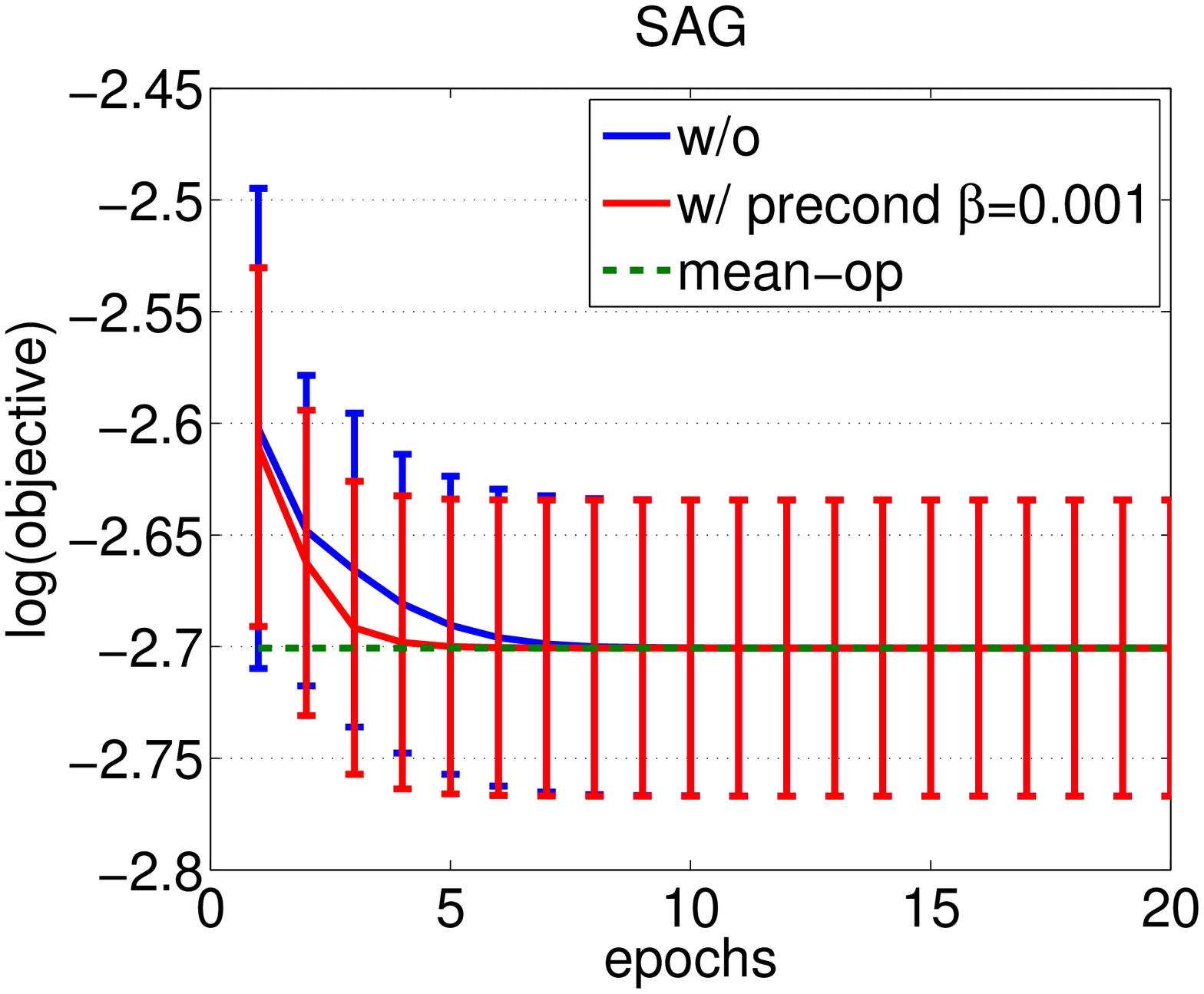}}}
\caption{Convergence of two SGD variants w/ and w/o  data preconditioning for solving the least square problem (a,b) and logistic regression problem   on the synthetic data with the eigenvalues following a polynomial decay. The value of $\lambda$ is set to $10^{-5}$. The condition numbers of the two problems are reduced from $=2727813$ and $681953$ to $c'=1.88$, and $32506$, respectively. 
}\label{fig:convlsreg}
\end{figure}

\begin{figure}
\centering
\subfigure[poly-$\tau (0.5)$, $\beta=0.99$]{\includegraphics[scale=0.25]{{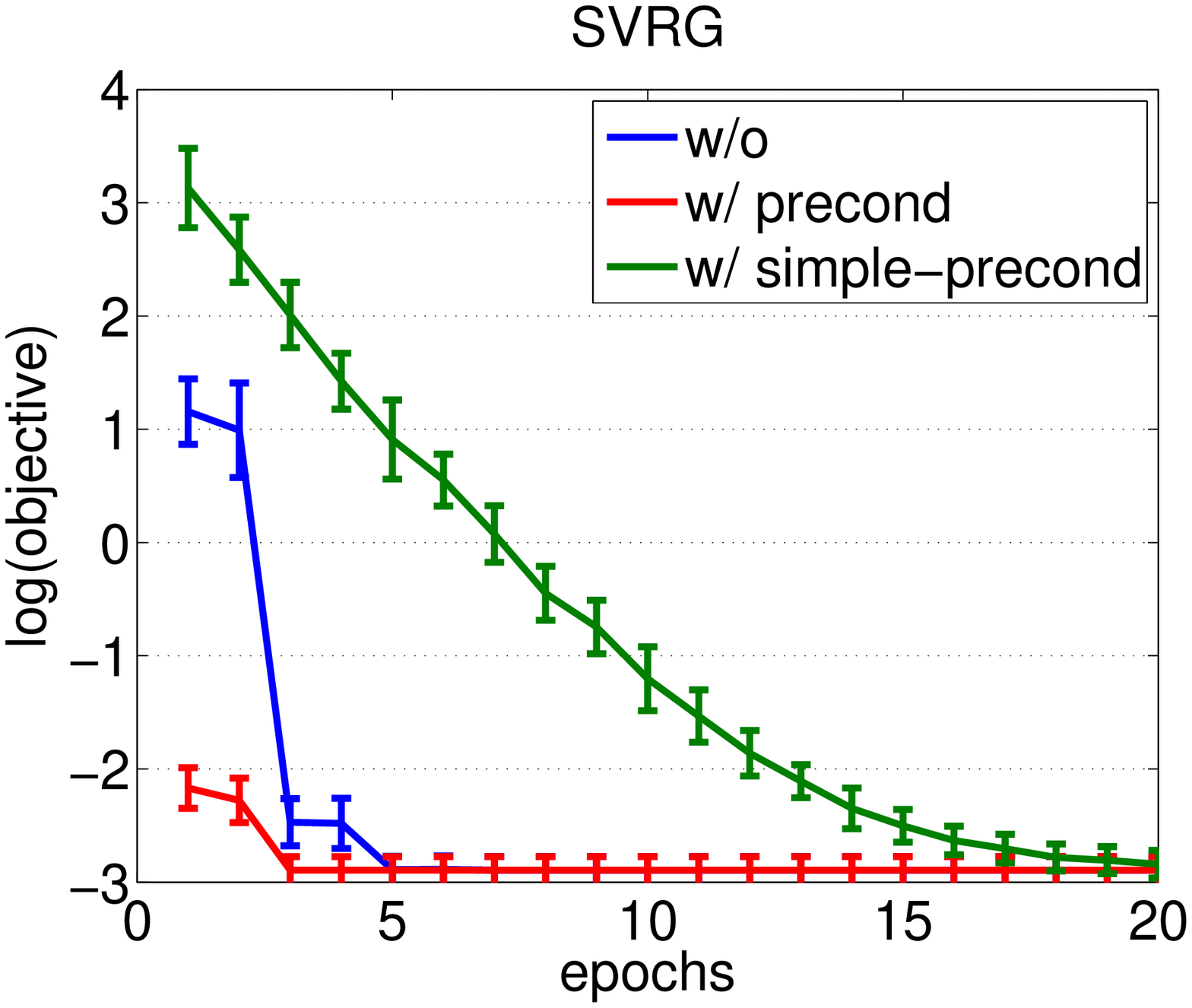}}}\hspace*{0.1in}
\subfigure[poly-$\tau (0.5)$, $\beta=0.99$]{\includegraphics[scale=0.25]{{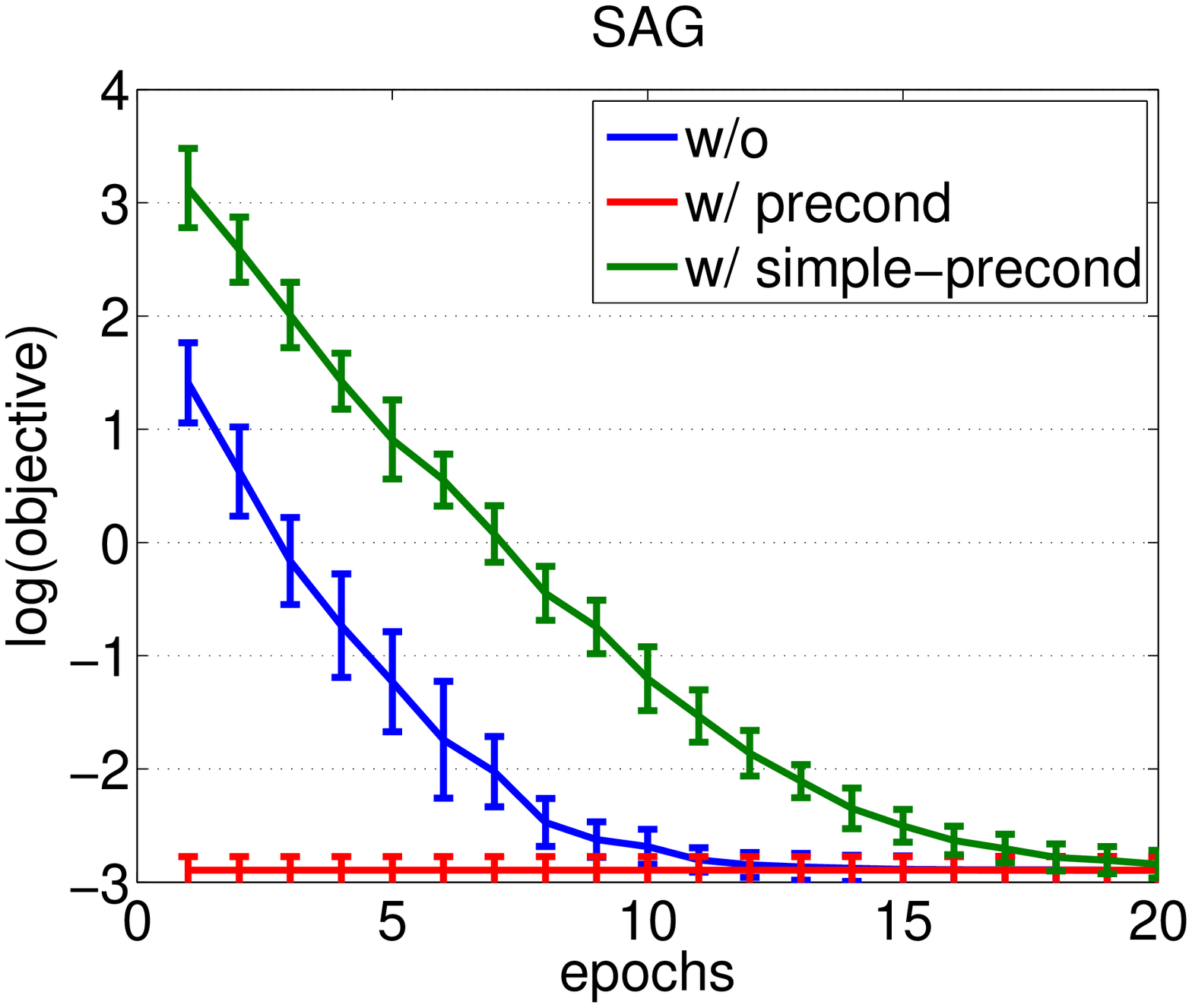}}}

\subfigure[exp-$\tau (0.5)$, $\beta=0.99$]{\includegraphics[scale=0.25]{{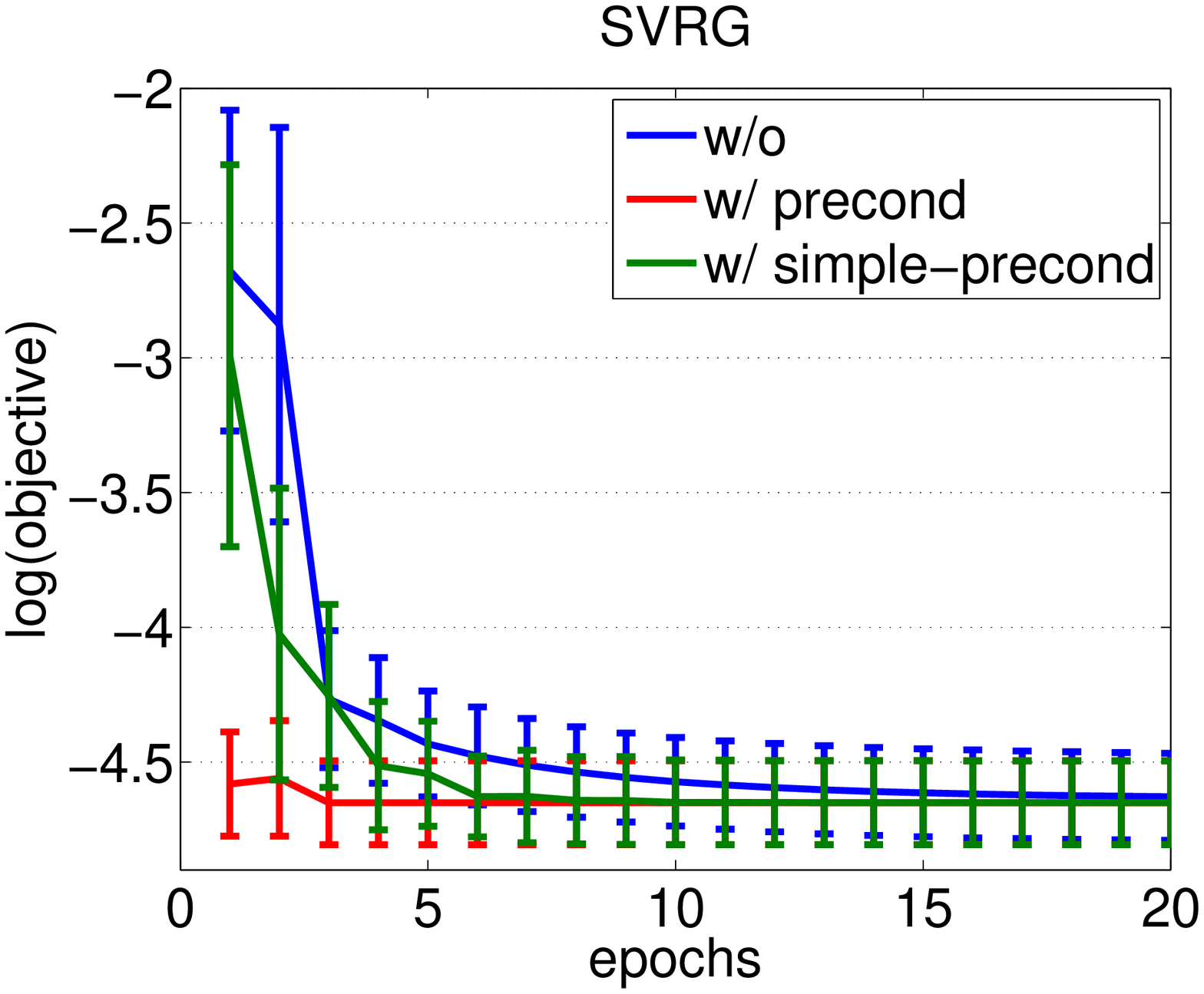}}}
\subfigure[exp-$\tau (0.5)$, $\beta=0.99$]{\includegraphics[scale=0.25]{{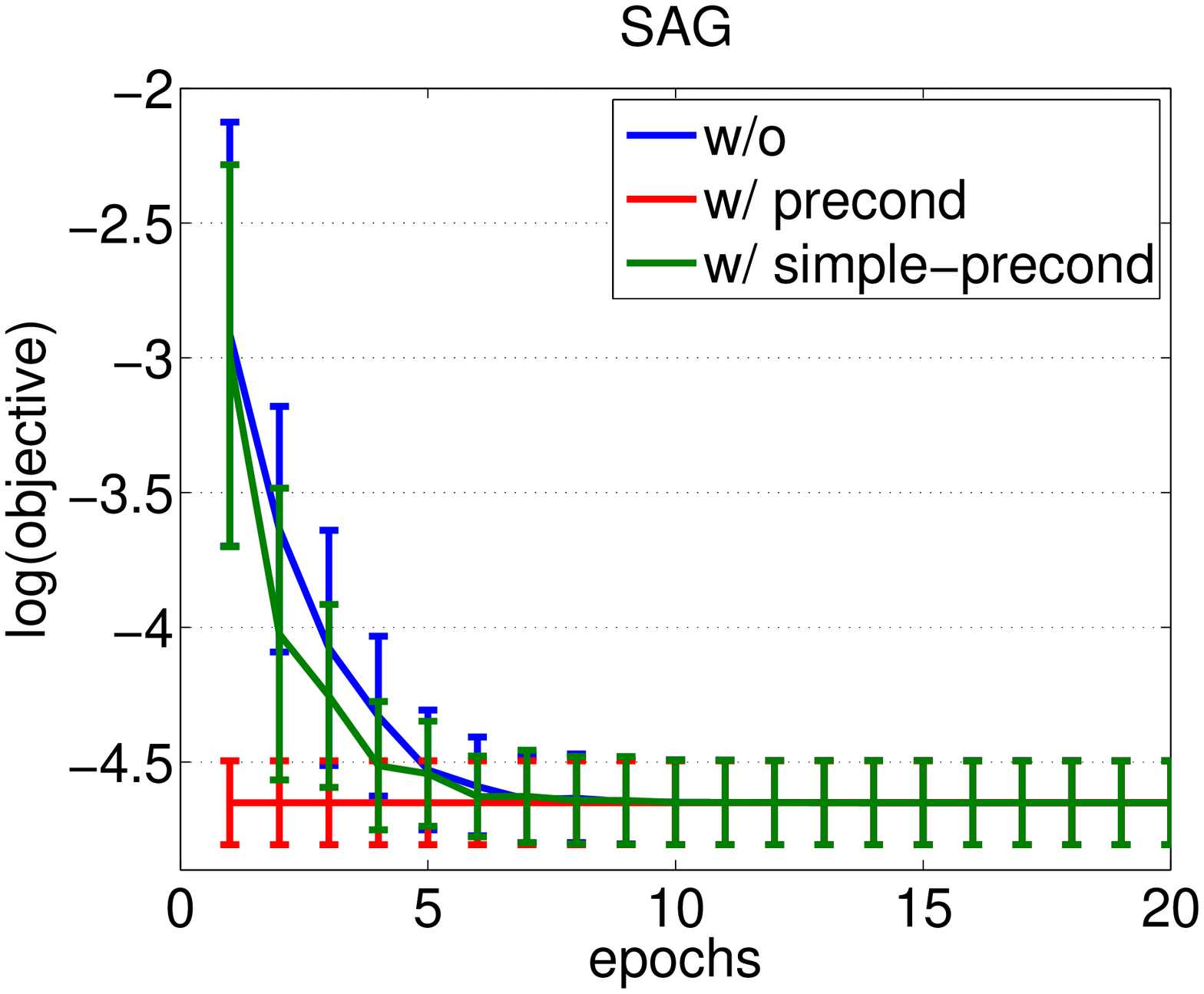}}}
\caption{Comparison of the proposed data preconditioning with the straightforward approach by solving~(\ref{eqn:P1}) (simple-precond) on the synthetic regression data generated with different decay of eigen-values.}\label{fig:cp1}
\end{figure}

\begin{figure}
\centering
\subfigure[$d=5000, \beta=0.99$]{\includegraphics[scale=0.25]{{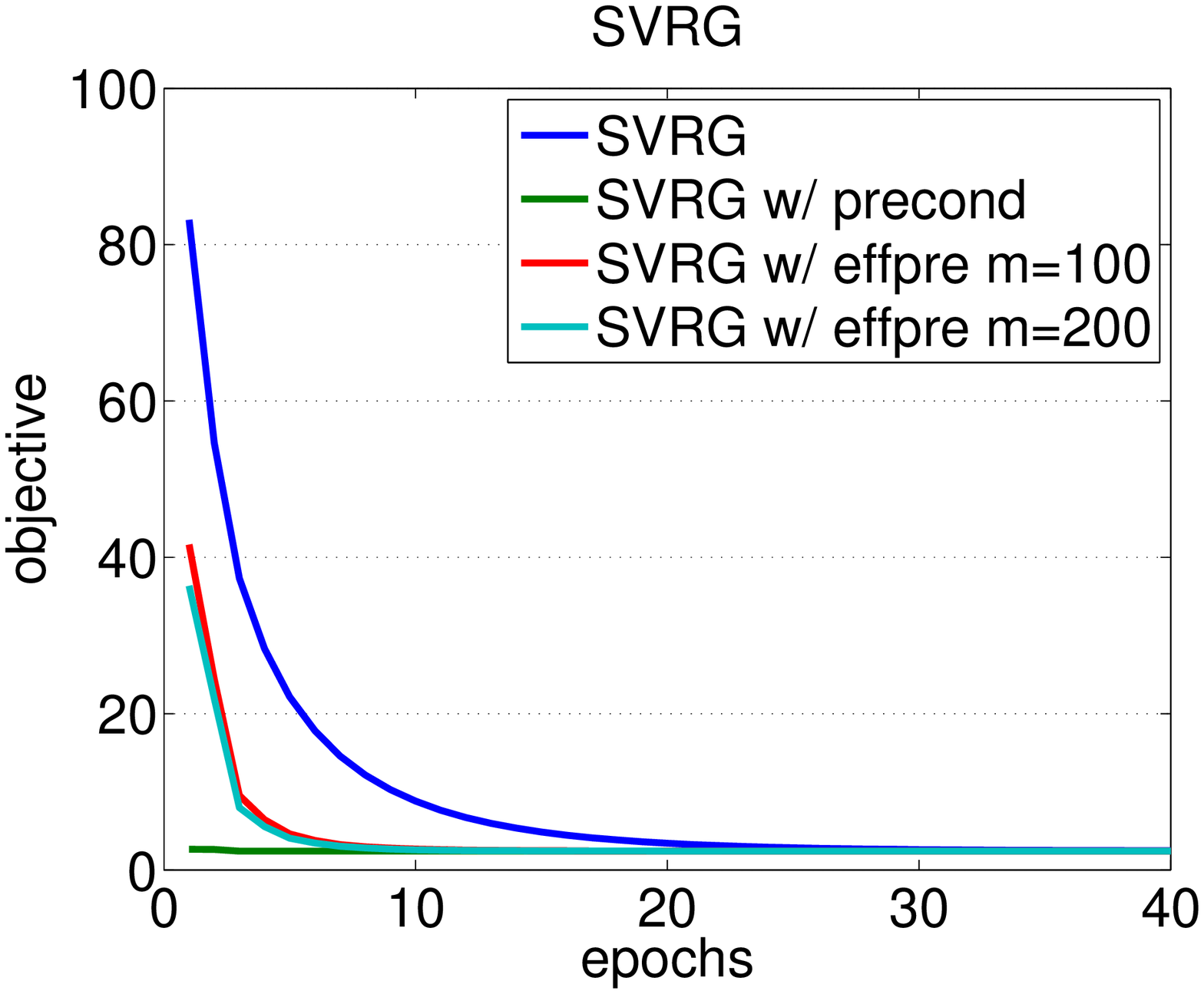}}}
\subfigure[$d=5000, \beta=0.001$]{\includegraphics[scale=0.25]{{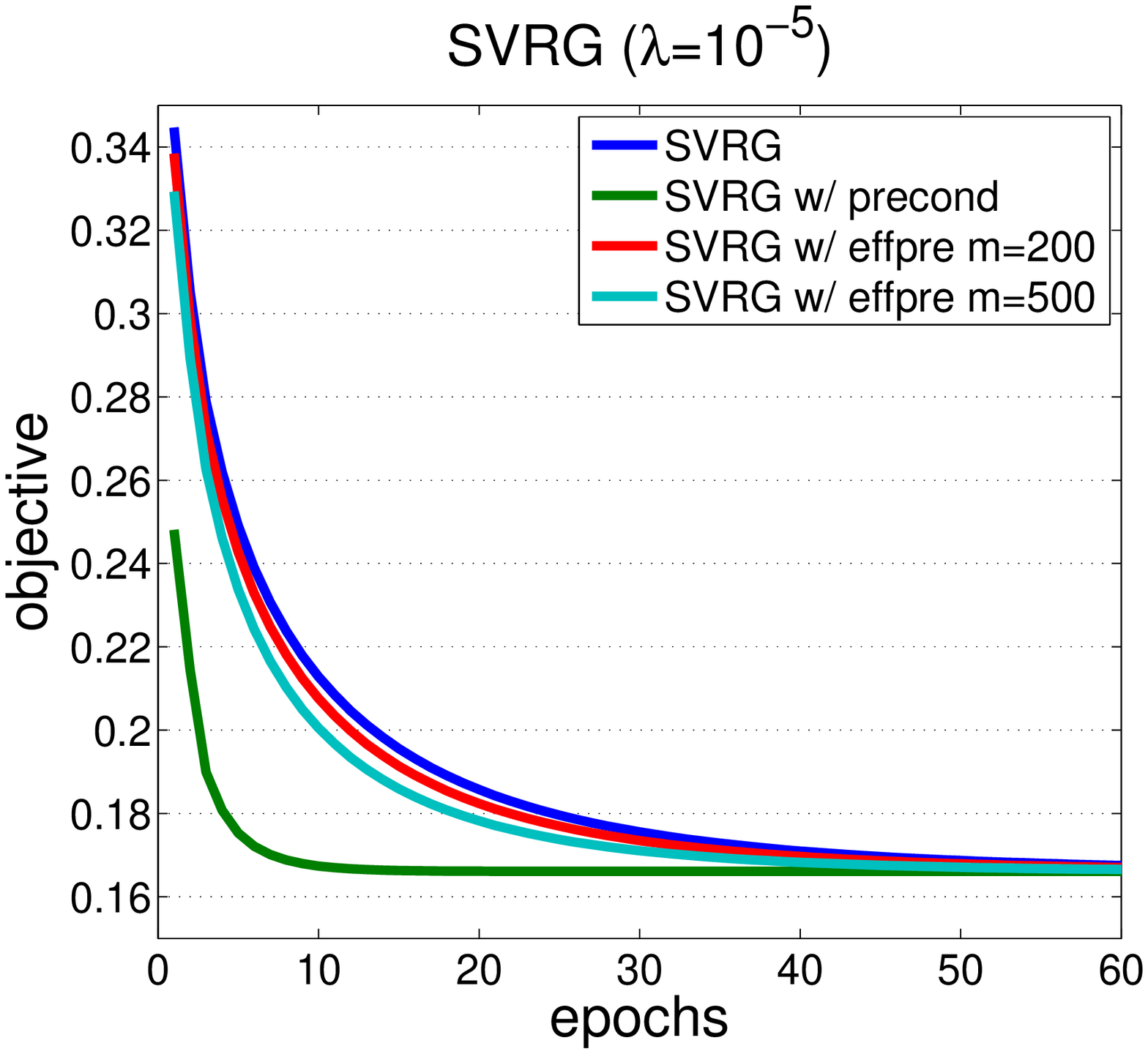}}}

\caption{ Comparison of   convergence of SVRG using full data and sub-sampled data for constructing the preconditioner on the synthetic  data with $d=5000$ features for regression (left) and logistic regression (right).  
}\label{fig:eff}
\end{figure}

Next, we present some experimental  results on convergence. 
In our experiments we focus on two tasks namely least square regression and logistic regression, and we study two variance reduced SGDs namely stochastic average gradient (\textbf{SAG})~\cite{DBLP:journals/corr/SchmidtRB13} and stochastic variance reduced SGD (\textbf{SVRG})~\cite{NIPS2013_4937}.  For SVRG, we set the step size as $0.1/\tilde L$,  where $\tilde L$ is the smoothness parameter of  the individual loss function plus the regularization term in terms of $\w$. The number of iterations for the inner loop in SVRG is set to $2n$ as suggested by the authors.   For SAG, the theorem indicates the step size is less than  $1/(16\tilde L)$ while the authors have reported that using large step sizes like $1/\tilde L$ could yield better performances. Therefore we use $1/\tilde L$ as the step size unless otherwise specified. Note that we are not aiming to optimize the performances by using pre-trained  initializations~\cite{NIPS2013_4937} or by tuning the step sizes. Instead, the initial solution for all algorithms are set to zeros and  the step sizes used in our experiments are either suggested in previous papers or have been observed to perform well in practice.  In all experiments, we compare  the convergence vs the number of epochs.

 We generate synthetic data as described above. 
 For least square regression,  the response variable is generated by $y=\w^{\top}\x + \varepsilon$, where $w_i\sim\mathcal N(0, 100)$ and $\varepsilon\sim\mathcal N(0,0.01)$. For logistic regression, the label is generated by $y=sign(\w^{\top}\x + \varepsilon)$. Figure~\ref{fig:convlsreg} shows the objective curves for minimizing the two problems by SVRG, SAG w/ and w/o data preconditioning. The results clearly demonstrate data preconditioning can significantly boost the convergence.  
 
 To further justify the proposed theory of data preconditioning, we also compare with the straightforward approach that solves the preconditioned problem in~(\ref{eqn:P1}) with the same data preconditioner. The results are shown  in Figure~\ref{fig:cp1}.  These results verify that using the straightforward data preconditioning may not boost the convergence. 

Finally, we  validate the  performance of the efficient data preconditioning presented in Section~\ref{sec:imp}. We generate a synthetic data as before with $d=5000$ features and with eigenvalues following the poly-$0.5$ decay, and plot the convergence of SVRG for solving least square regression and logistic regression with different preconditioners, including $H^{-1/2}$ and $\widehat H^{-1/2}$ with different values of $m$. The results are shown in Figure~\ref{fig:eff}, which demonstrate that  using a small number $m$  ($m=100$ for regression and $m=500$ for logistic regression) of training samples for constructing the data preconditioner  is sufficient to gain substantial boost in the convergence. 

 \begin{table}[t]
\caption{the statistics of real data sets}\label{tab:1}
\begin{tabular}{llll}
\toprule
data set&$n$&$d$&task\\
\midrule
covtype&581012&54&classification\\
MSD&463715&90&regression\\
CIFAR-10&10000&1024&classification\\
E2006-tfidf&19395&150350&regression\\
\bottomrule
\end{tabular}
\label{tab:data}
\end{table}

\subsection{Real  Data} Next, we present some experimental results on real data sets. We choose four data sets, the million songs data (MSD)~\cite{conf/ismir/Bertin-MahieuxEWL11} and the E2006-tfidf data~\footnote{\url{http://www.csie.ntu.edu.tw/~cjlin/libsvmtools/datasets/regression.html}}~\cite{Kogan:2009:PRF:1620754.1620794}  for regression, and the CIFAR-10 data~\cite{citeulike:7491128} and the covtype data~\cite{Blackard:1998:CNN:928509}  for classification.  The task on covtype is to predict the forest cover type from cartographic variables.  The task on MSD is to predict the year of a song based on the audio features. Following the previous work, we map the target variable of year from $1922\sim 2011$ into $[0,1]$. The task on CIFAR-10 is to predict the object in $32\times 32$ RGB images. Following~\cite{citeulike:7491128}, we use the mean centered pixel values as the input. We construct a binary classification problem to classify dogs from cats with a total of $10000$ images. . The task on E2006-tfidf is to predict the volatility of stock returns based on the SEC-mandated financial text report, represented by tf-idf.   The size of these data sets are summarized in Table~\ref{tab:1}. We minimize regularized least square loss and regularized logistic loss for regression and classification, respectively. 

\begin{figure}[t]
\centering

\subfigure{\label{fig:r1}\includegraphics[scale=0.25]{{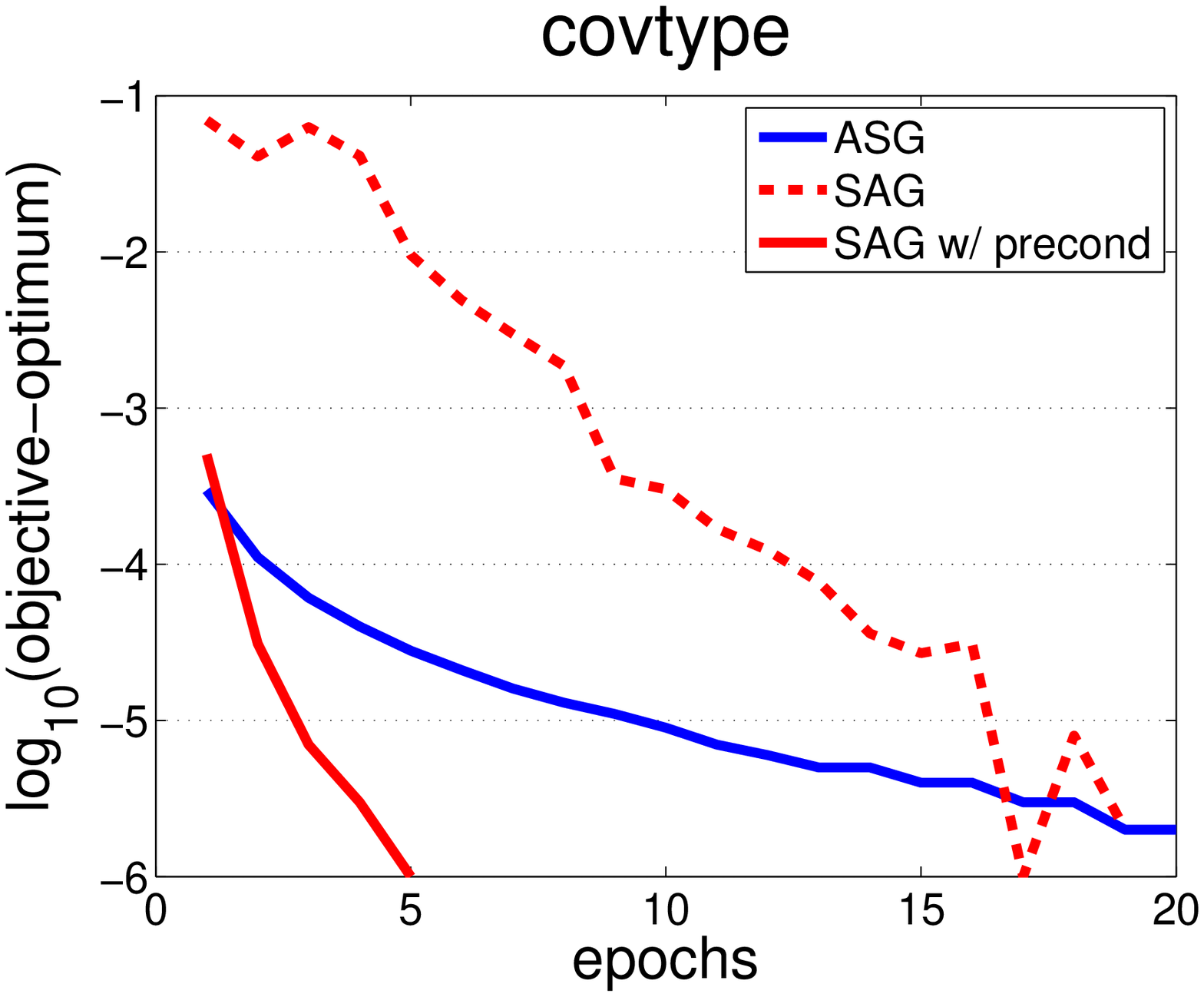}}}
\subfigure{\label{fig:r1}\includegraphics[scale=0.25]{{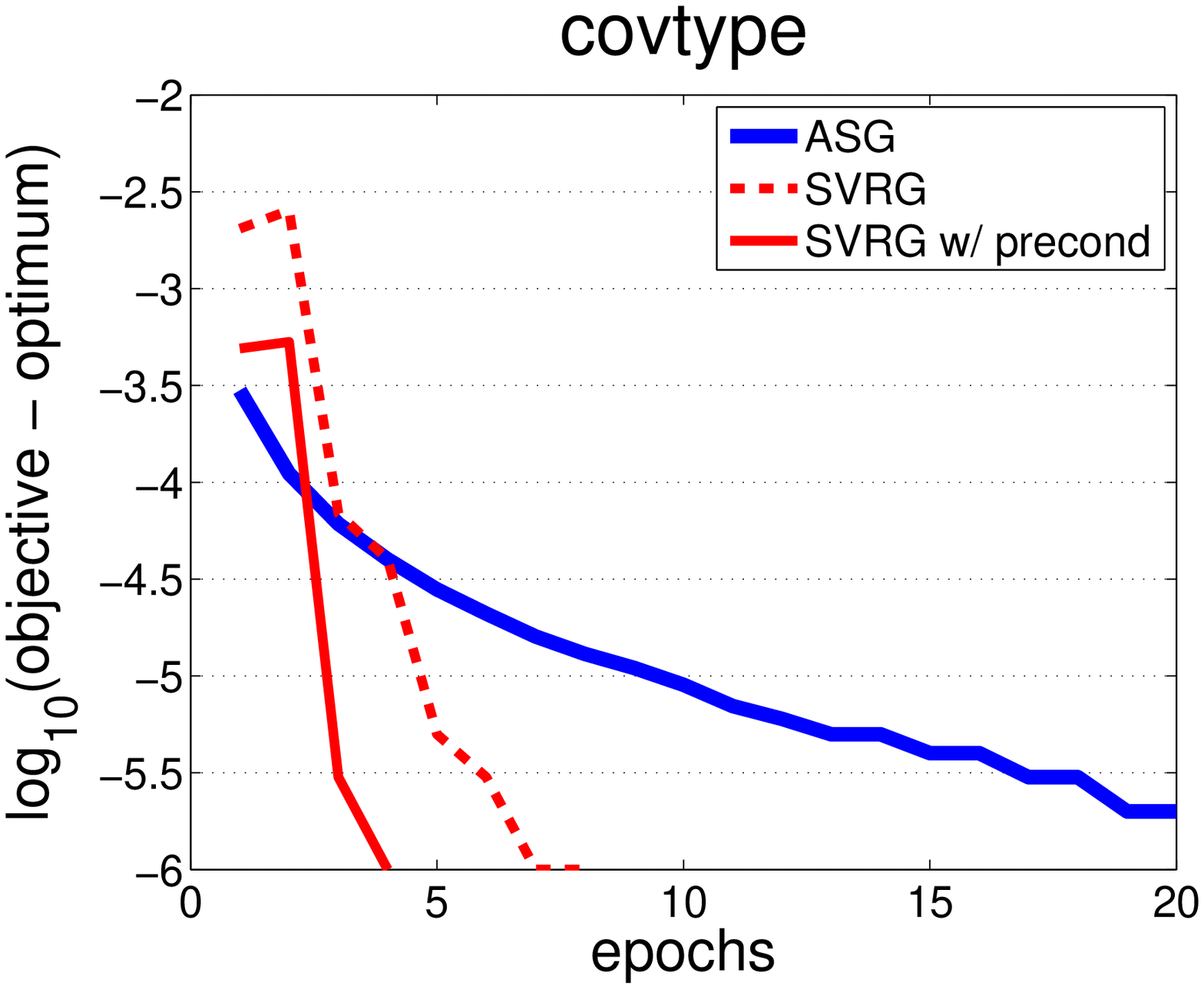}}}
\vspace*{-0.1in}
\caption{comparison of convergence  on covtype.  The value of $\lambda$ is set to $1/n$,  and the value of $\beta$ is $0.01$ for classification.  
}\label{fig:5}
\end{figure}

 \begin{figure}[t]
\centering

\subfigure{\label{fig:r1}\includegraphics[scale=0.25]{{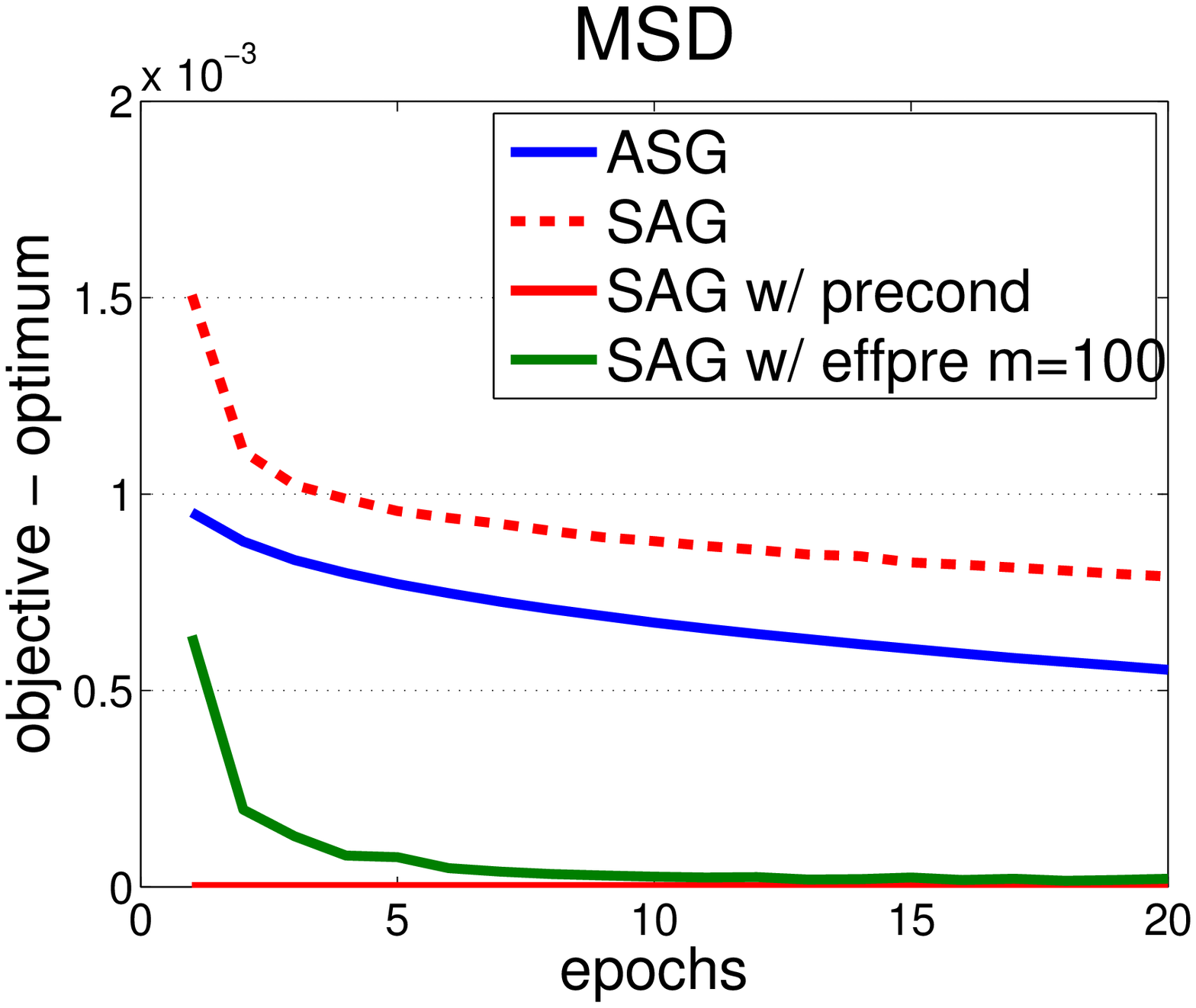}}}
\subfigure{\label{fig:r1}\includegraphics[scale=0.25]{{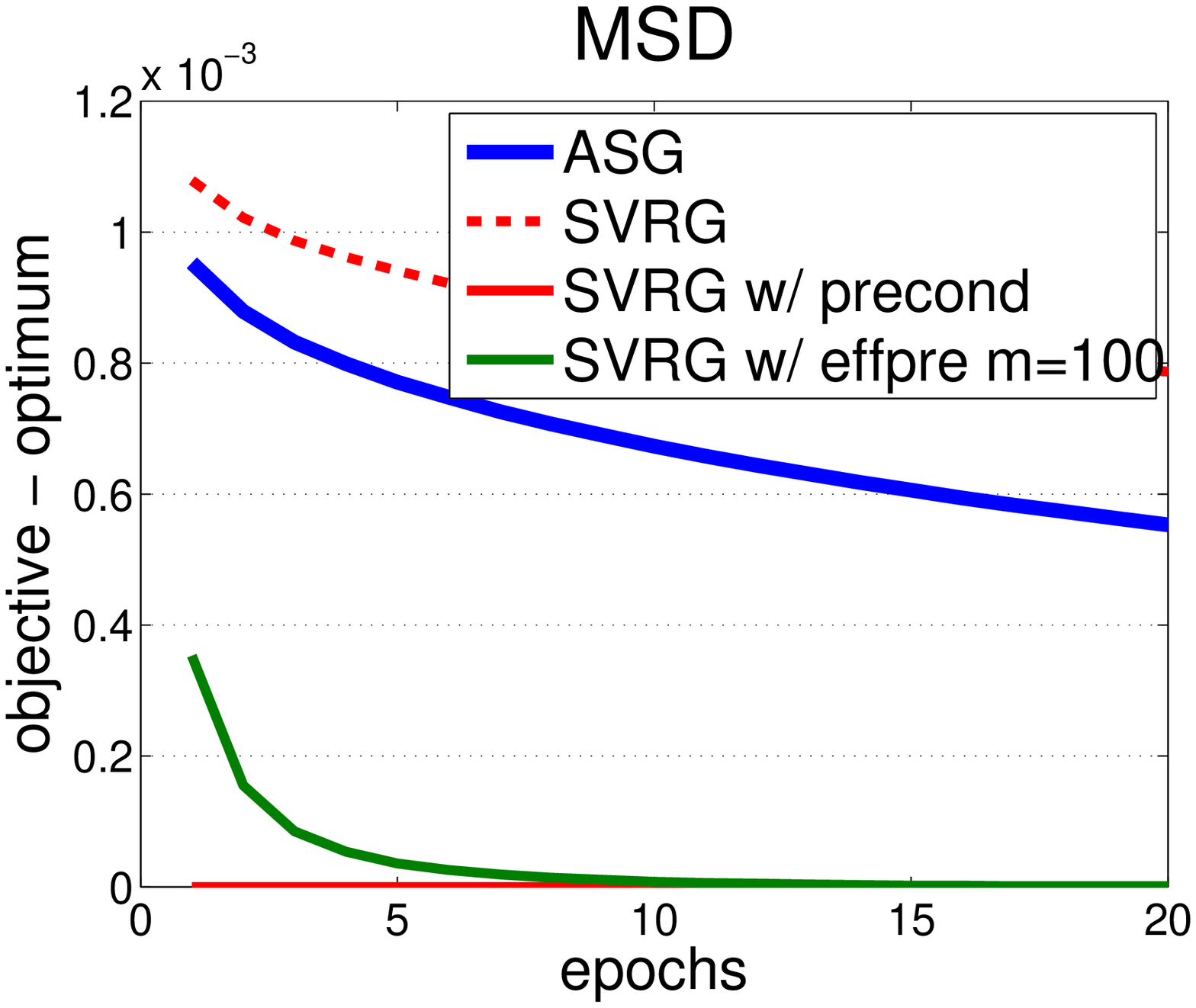}}}
\vspace*{-0.1in}
\caption{comparison of convergence  on MSD. The value of $\lambda$ is set to $2\times 10^{-6}$ MSD,  and the value of $\beta$ is $0.99$ for regression.  
}\label{fig:6}
\end{figure}

 \begin{figure}[t]
\centering

\subfigure{\includegraphics[scale=0.25]{{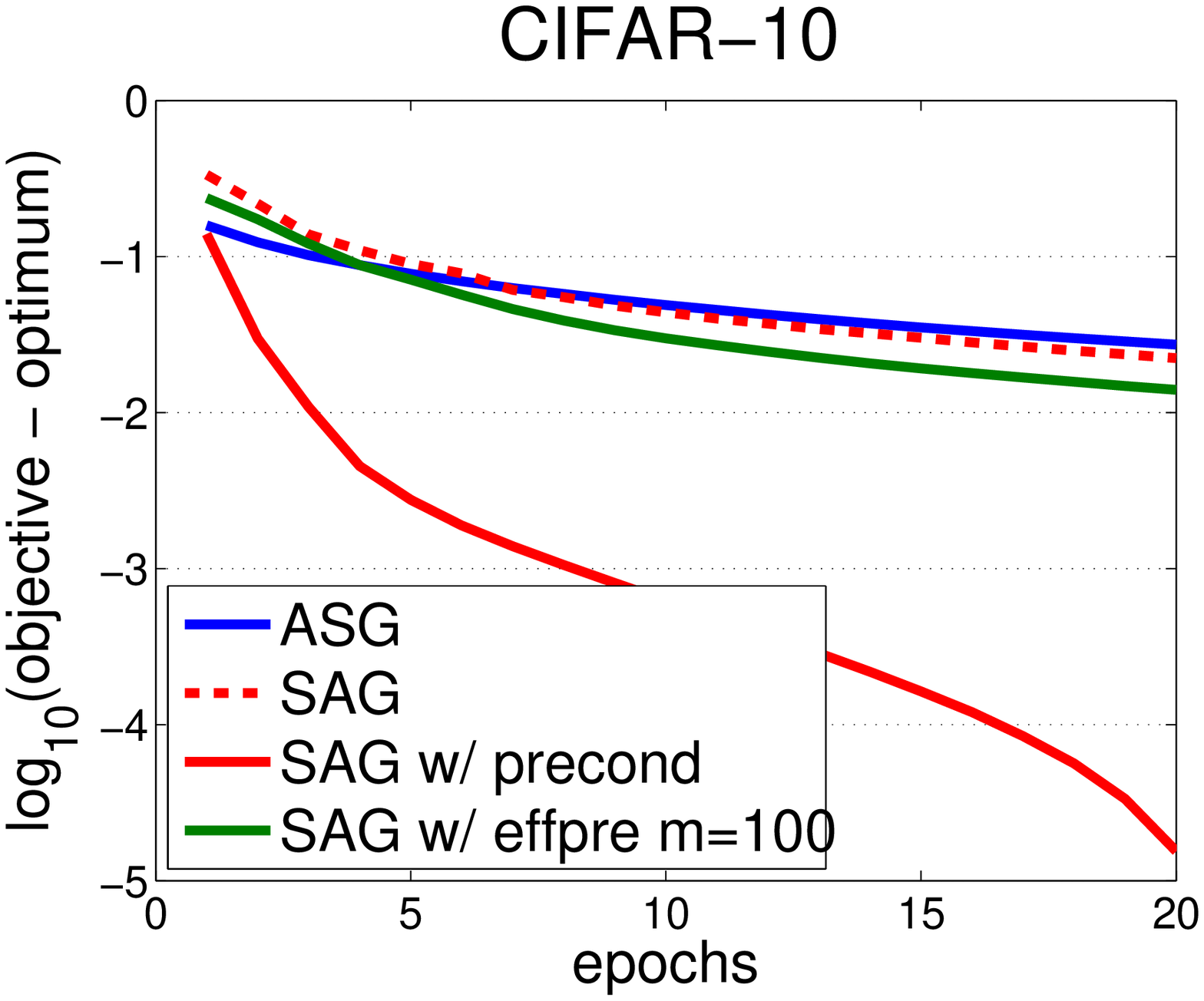}}}
\subfigure{\includegraphics[scale=0.25]{{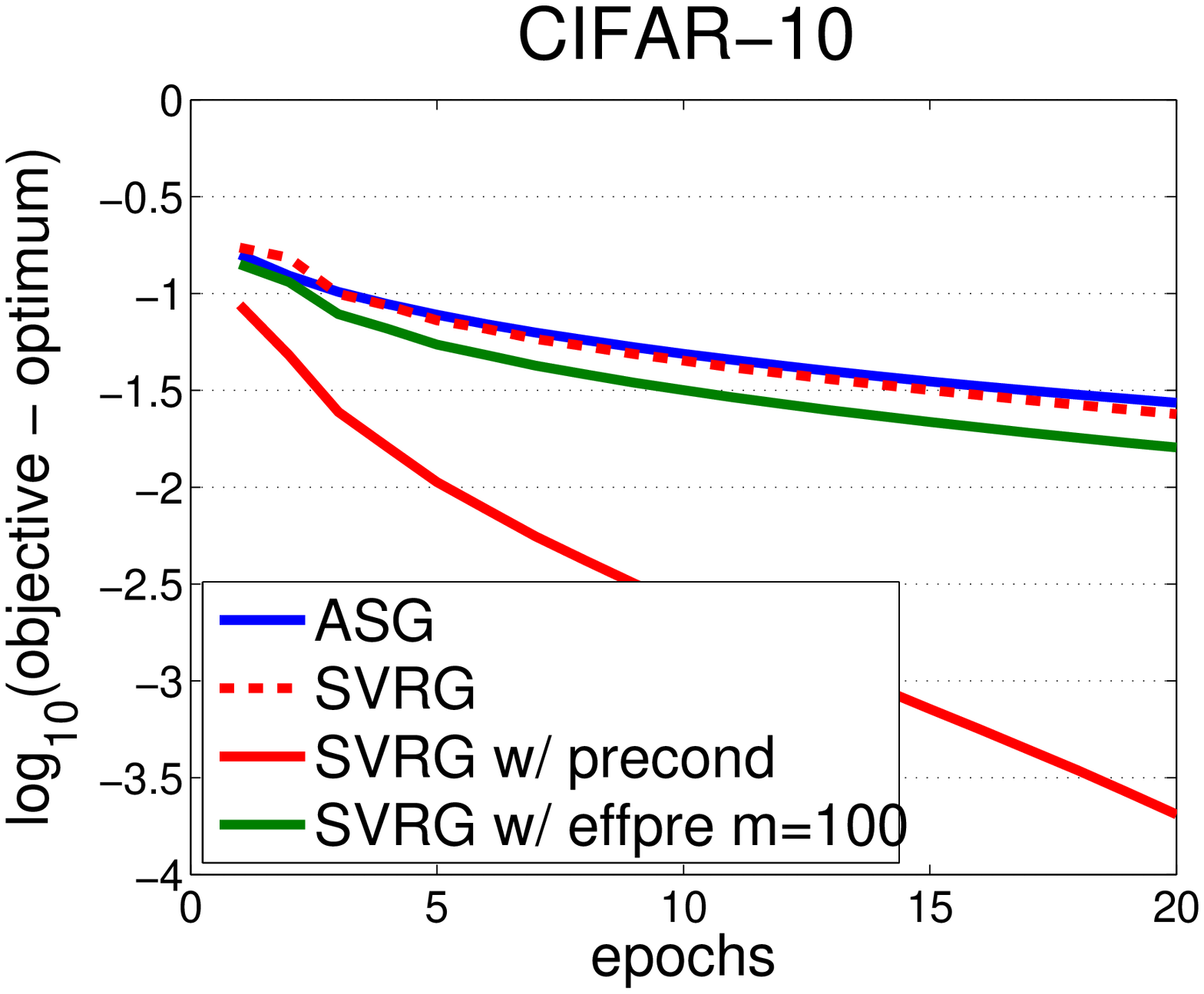}}}\hspace*{-0.14in}
\vspace*{-0.1in}
\caption{comparison of convergence on CIFAR-10. The value of $\lambda$ is set to   $10^{-5}$ for CIFAR-10,  and the value of $\beta$ is $0.01$ for classification.  
}\label{fig:7}
\end{figure}

 The experiment results and the setup are shown in Figures~\ref{fig:5} $\sim$ \ref{fig:8}, in which we also report the convergence of Bach and Moulines' \textbf{ASG} algorithm~\cite{DBLP:conf/nips/BachM13} on the original problem with a step size $c/R^2$, where $c$ is tuned in a range from 1 to $10$. The step size for both SAG and SVRG is set to $1/\tilde L$.  In all figures, we plot the relative objective values~\footnote{the distance of the objective values to the optimal value.} either in log-scale or standard scale versus the epochs. For obtaining the optimal objective value, we run the fastest algorithm sufficiently long until the objective value keeps the same or is within $10^{-8}$ precision.   On MSD and CIFAR-10, the convergence curves of optimizing the preconditioned data problem  using both the full data preconditioning and the sampling based data preconditioning are plotted.  On covtype,  we only plot the convergence curve for optimization using the full data preconditioning, which is efficient enough. On E2006-tfidf, we only conduct optimization using the sampling based data preconditioning because the dimensionality is very large which renders the full data preconditioning very expensive. 
 These results again demonstrate that  the data preconditioning could yield significant speed-up in convergence, and the sampling based data preconditioning could be useful for high-dimensional problems.

 Finally, we report some results on the running time. The computational overhead of the data preconditioning on the four data sets~\footnote{The running time on MSD, CIFAR-10, and E2006-tfidf is  for the sampling based data preconditioning and that on covtype is for the full data preconditioning.} running on Intel Xeon 3.30GHZ CPU is shown in Table~\ref{tab:2}. These computational overhead is marginal or comparable to running time per-epoch. Since the convergence on the preconditioned problem is  faster than that on the original problem by tens of epochs, therefore the training  on the preconditioned problem is  more efficient than that on the original problem. As an example, we plot the relative objective value versus the running time on E2006-tfidf dataset in Figure~\ref{fig:9}, where for SAG/SVRG with efficient preconditioning we count the preconditioning time at the beginning.

\begin{figure}[t]
\centering

\subfigure{\includegraphics[scale=0.25]{{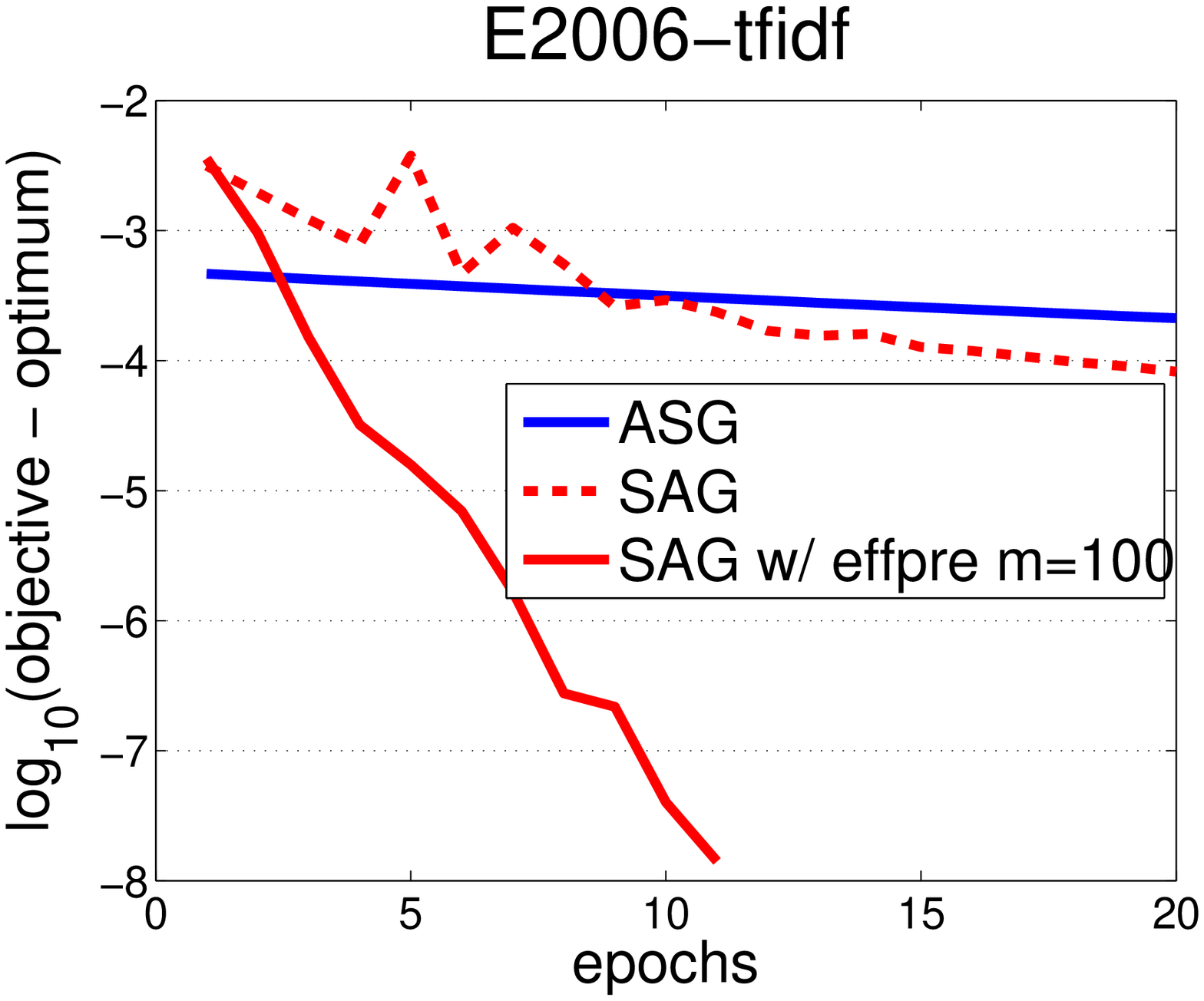}}}
\subfigure{\includegraphics[scale=0.25]{{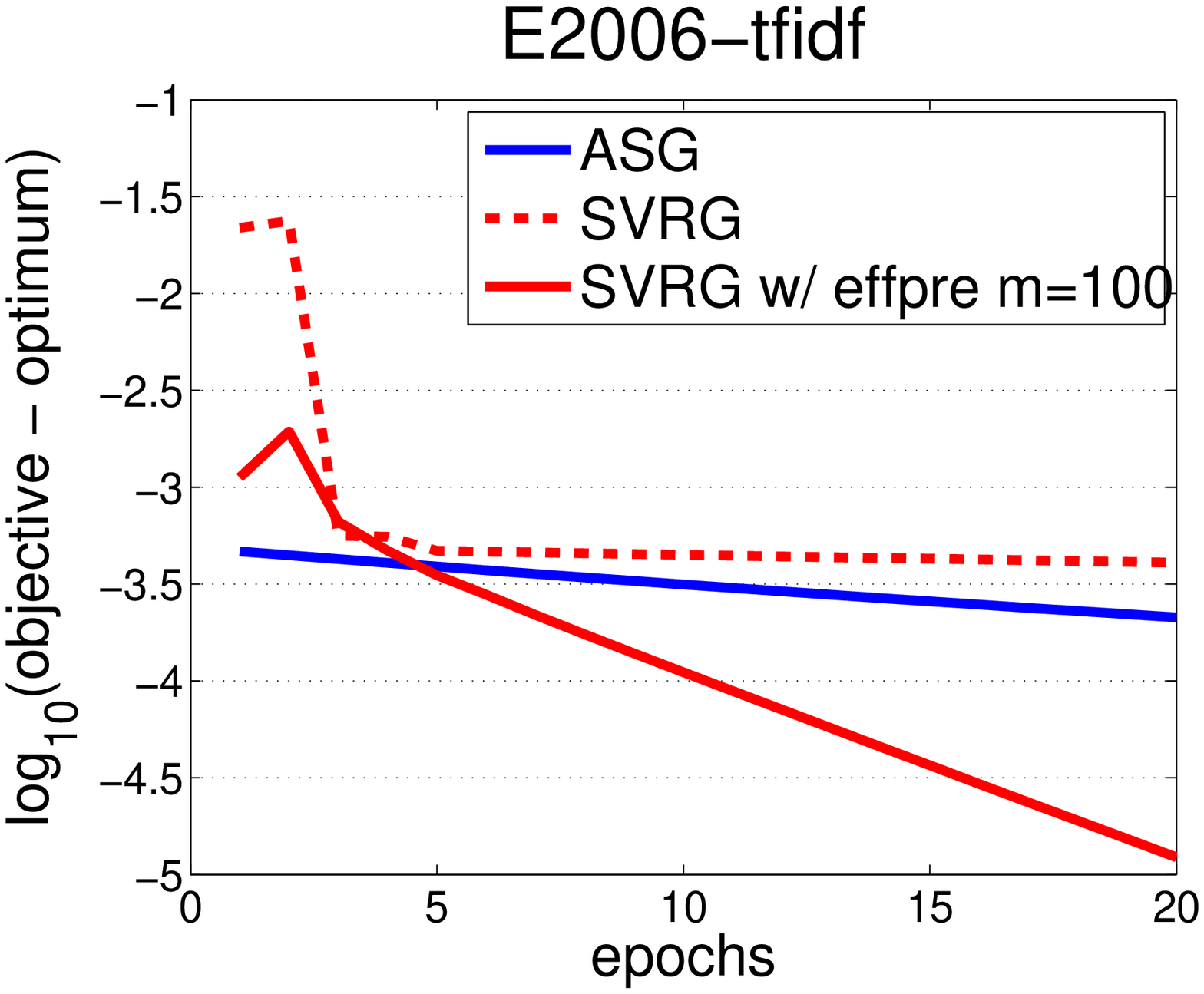}}}
\caption{comparison of convergence  on E2006-tfidf. The value of $\lambda$ is set to $1/n$,  and the value of $\beta$ is $0.99$ for regression.  
}\label{fig:8}
\end{figure}

\section{Conclusions}
We have presented a theory of data preconditioning for boosting the convergence of first-order optimization methods for the regularized loss minimization. We characterized the conditions on the loss function and the data under which the condition number of the regularized loss minimization problem can be reduced and thus the convergence can be improved. We also presented an efficient sampling based data preconditioning which could be useful for high dimensional data, and  analyzed the condition number. Our experimental results  validate our theory and demonstrate the potential advantage of the data preconditioning for solving ill-conditioned regularized loss minimization problems. 

 \begin{table}[t]
\caption{running time of preconditioning (p-time)}\label{tab:2}
\begin{tabular}{lllll}
\toprule
&covtype &MSD&CIFAR-10&E2006-tfidf\\
\midrule
p-time&1.18s&0.30s&0.56s&12s\\
\bottomrule
\end{tabular}
\label{tab:data}
\end{table}

\begin{figure}[t]
\centering

\subfigure{\includegraphics[scale=0.25]{{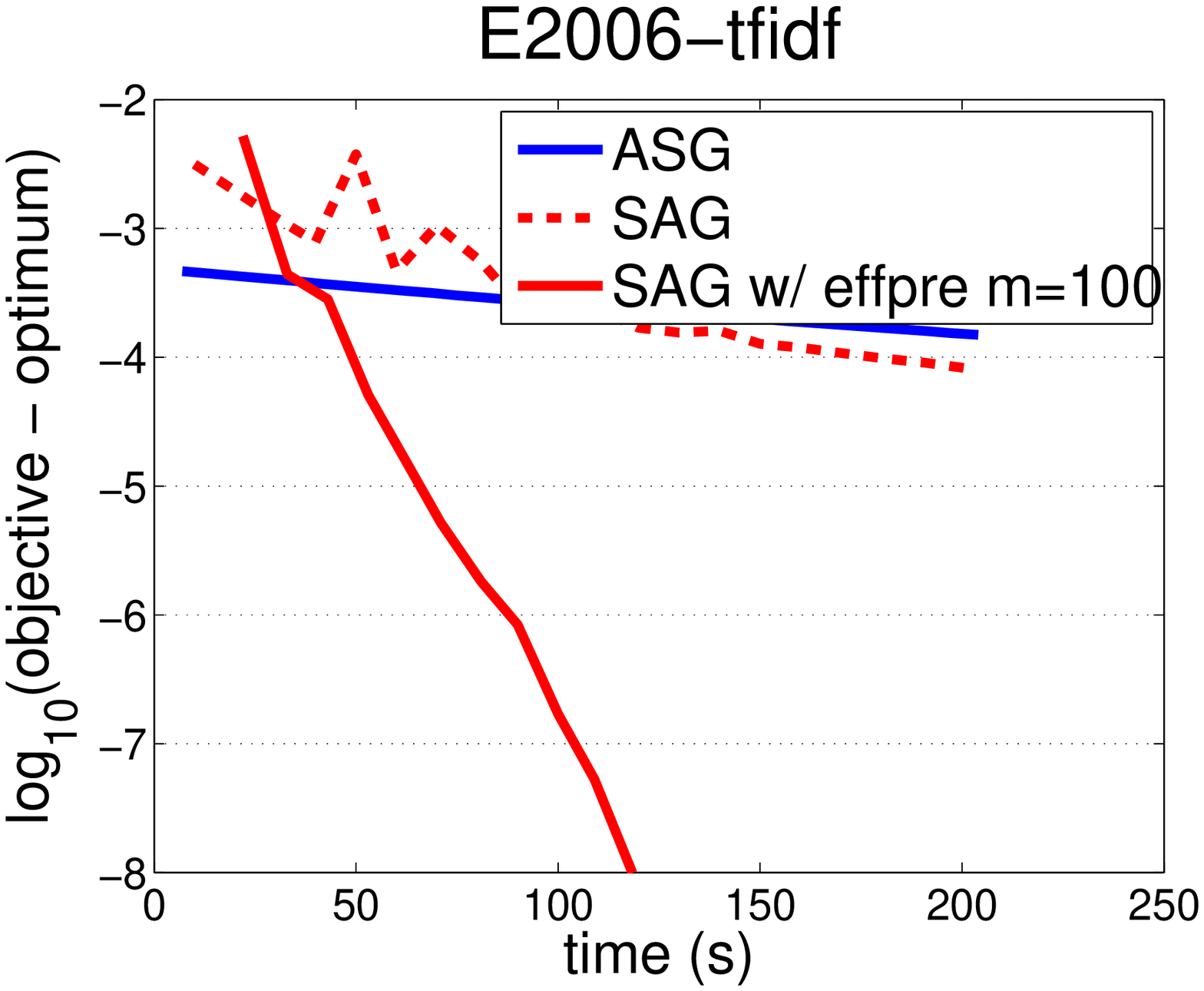}}}
\subfigure{\includegraphics[scale=0.25]{{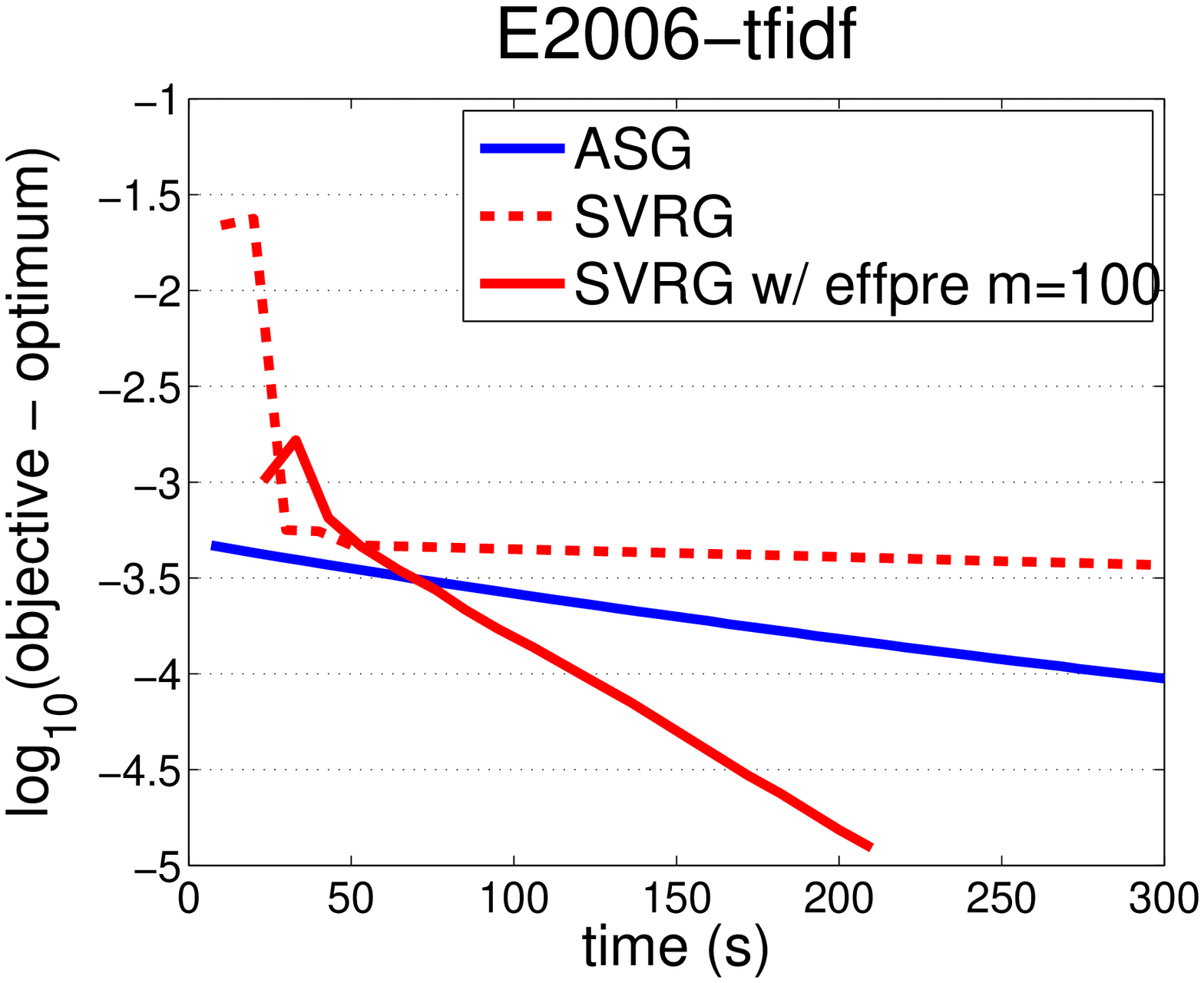}}}
\caption{comparison of convergence versus running time on E2006-tfidf. The value of $\lambda$ is set to $1/n$,  and the value of $\beta$ is $0.99$ for regression.  
}\label{fig:9}
\end{figure}

\section*{Acknowledgements}
The authors would like to thank the anonymous reviewers for their helpful and insightful
comments. T. Yang was supported in part by NSF (IIS-1463988) and NSF (IIS-1545995).


\section*{Appendix A: Proof of Proposition~\ref{prop:1}}
We first prove for the case of polynomial decay $\sigma^2_i = i^{-2\tau}, \tau\geq1/2$.
\begin{align*}
\sum_{i=1}^d\frac{\sigma_i^2}{\sigma_i^2  + \rho} &= \sum_{i=1}^d \frac{1}{ 1+ i^{2\tau}\rho}\leq \int^d_0 \frac{1}{1+t^{2\tau}\rho}dt\\
&=\int^{\rho d^{2\tau}}_0 \frac{1}{1+s}\rho^{-1/(2\tau)}s^{1/(2\tau) - 1}\frac{1}{2\tau}d s\text{ (with the change of variable $s=\rho t^{2\tau}$)}\\
&\leq \int^{\infty}_0 \frac{1}{1+s}\rho^{-1/(2\tau)}s^{1/(2\tau) - 1}\frac{1}{2\tau}d s\\
& = O(\rho^{-1/(2\tau)}) \text{  (since the integral is finite)}
\end{align*}
For the exponential decay $\sigma_i^2 = e^{-\tau i}$, we have
\begin{align*}
\sum_{i=1}^d\frac{\sigma_i^2}{\sigma_i^2 + \rho} &= \sum_{i=1}^d\frac{e^{-\tau i}}{ e^{-\tau i}+\rho}\leq \int^d_0 \frac{e^{-\tau t}}{e^{-\tau t}+\rho}dt\\
&= \frac{1}{\tau}\int^{1}_{e^{-\tau d}} \frac{s}{s + \rho}ds \text{ (with the change of variable $s = e^{-\tau t}$)}\\
&\leq \frac{1}{\tau}\int^{1}_{0} \frac{s}{s + \rho}ds\leq \frac{1}{\tau}\int^{1}_{0} \frac{1}{s + \rho}ds\\
& = \frac{1}{\tau}[\log(1+\rho) - \log(\rho)] = O\left(\log\left(\frac{1}{\rho}\right)\right)
\end{align*}

\section*{Appendix B: Proof of Theorem~\ref{thm:key} }
\begin{proof}
Let us re-define $H=\hat\rho I + C$.  We first show that the upper bound of the preconditioned data norm using $\Hh^{-1}$ is only scaled-up by a constant factor (e.g., $2$) compared to that using $H^{-1}$. We can first bound $\x_i^{\top}\Hh^{-1}\x_i$ by $\x_i^{\top}H^{-1}\x_i$
\begin{align*}
\x_i^{\top}\Hh^{-1}\x_i &  = \x_i^{\top}H^{-1/2}\left(H^{1/2}\Hh^{-1}H^{1/2}\right)H^{-1/2}\x_i\\
&\leq \lambda_{\max}\left(H^{1/2}\Hh^{-1}H^{1/2}\right)\x_i^{\top}H^{-1}\x_i, \quad i=1,\ldots, n.
\end{align*}
So the crux of bounding $\x_i^{\top}\Hh^{-1}\x_i$ is to bound $\lambda_{\max}\left(H^{1/2}\Hh^{-1}H^{1/2}\right)$, i.e., the largest eigenvalue of $H^{1/2}\Hh^{-1}H^{1/2}$.  To proceed the proof, we need the following Lemma.  
\begin{lemma}\cite{journals/corr/abs-1011-1595}
Let $\X$ be a finite set of PSD matrices with dimension $k$, and suppose that
\[
    \max_{X \in \X} \lambda_{\max}(X) \leq B.
\]
Sample $\{X_1, \ldots, X_{\ell}\}$ uniformly at random from $\X$ without replacement. Compute
\[
    \mu_{\max} = \ell \lambda_{\max}(\E[X_1]), \quad     \mu_{\min} = \ell \lambda_{\min}(\E[X_1])
\]
Then
\begin{align*}
& \Pr\left\{\lambda_{\max}\left(\bar X\right) \geq (1 + \delta) \mu_{\max} \right\} \leq k \left[ \frac{e^{\delta}}{(1 + \delta)^{1 + \delta}}\right]^{\frac{\mu_{\max}}{B}} \\
& \Pr\left\{\lambda_{\min}\left(\bar X\right) \leq (1 - \delta) \mu_{\max} \right\} \leq k \left[ \frac{e^{-\delta}}{(1 - \delta)^{1 - \delta}}\right]^{\frac{\mu_{\max}}{B}}
\end{align*}
where $\bar X= \sum_{i=1}^lX_i$. 
\end{lemma}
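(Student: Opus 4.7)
The plan is to apply the matrix Laplace transform method of Ahlswede--Winter, refined by Tropp, together with a Hoeffding-type reduction that passes from sampling without replacement to sampling with replacement. I will treat only the upper tail; the lower tail is identical after the obvious sign flips.

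First, I invoke the matrix Laplace transform bound: for any $\theta > 0$, using $e^{\lambda_{\max}(M)} \leq \tr \exp(M)$ for Hermitian $M$, Markov's inequality applied to $e^{\theta \lambda_{\max}(\bar X)}$ gives
\[
\Pr\{\lambda_{\max}(\bar X) \geq a\} \leq e^{-\theta a}\,\E \tr \exp(\theta \bar X).
\]
Next, I reduce to sampling with replacement. Let $Y_1,\ldots,Y_\ell$ be i.i.d.\ uniform on $\X$ and set $\bar Y = \sum_i Y_i$. Because $M \mapsto \tr \exp(\theta M)$ is a convex scalar-valued function on $k\times k$ Hermitian matrices, the classical Hoeffding (1963) reduction --- extended to the matrix setting by Gross and Nesme --- yields
\[
\E \tr \exp(\theta \bar X) \leq \E \tr \exp(\theta \bar Y).
\]
The argument expresses the without-replacement sum as a convex combination of permuted with-replacement sums and invokes Jensen on $\tr \exp$.

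For the with-replacement term I peel off summands one at a time using Lieb's concavity theorem (equivalently, Tropp's ``master tail bound''):
\[
\E \tr \exp(\theta \bar Y) \leq \tr \exp\bigl(\ell \log \E \exp(\theta Y_1)\bigr).
\]
Since $0 \preceq Y_1 \preceq B\,I$, convexity of $x \mapsto e^{\theta x}$ on $[0,B]$ gives, in the L\"owner order, $\E \exp(\theta Y_1) \preceq I + (\E Y_1)(e^{\theta B}-1)/B$; operator monotonicity of $\log$ and $\log(I+A)\preceq A$ then yield
\[
\log \E \exp(\theta Y_1) \preceq \frac{e^{\theta B}-1}{B}\,\E Y_1.
\]
Combining these with $\tr\exp(M) \leq k\,e^{\lambda_{\max}(M)}$ and $\lambda_{\max}(\ell\,\E Y_1)=\mu_{\max}$ gives
\[
\Pr\{\lambda_{\max}(\bar X) \geq (1+\delta)\mu_{\max}\} \leq k \cdot \exp\!\left(-\theta(1+\delta)\mu_{\max} + \frac{e^{\theta B}-1}{B}\mu_{\max}\right),
\]
and optimizing with $\theta = B^{-1}\log(1+\delta)$ delivers the stated upper tail.

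The main obstacle is the Hoeffding reduction in the matrix setting. The scalar version is classical: a without-replacement sum is dominated in convex order by its with-replacement counterpart via a permutation-averaging coupling. One cannot impose convex order directly on matrix-valued sums, but the Gross--Nesme argument sidesteps this by passing through the scalar convex functional $\tr \exp$, to which Hoeffding's coupling and Jensen's inequality apply. With that reduction established, Lieb's concavity theorem and the Chernoff-style optimization over $\theta$ are now routine. The lower tail follows symmetrically using $-\theta$ and the companion inequality $e^{-\theta x} \leq 1 - x(1-e^{-\theta B})/B$ on $[0,B]$.
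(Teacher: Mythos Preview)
The paper does not prove this lemma at all: it is quoted verbatim as a known matrix Chernoff inequality from Tropp's work and is used as a black box inside the proof of Theorem~\ref{thm:key}. So there is no ``paper's own proof'' to compare against. Your sketch is a faithful outline of how the result is established in the cited reference (matrix Laplace transform, Gross--Nesme/Hoeffding reduction from without-replacement to with-replacement, Lieb's concavity to reach the master bound, then Chernoff optimization), and it is correct at the level of detail you give; but strictly speaking you have reproduced the argument of \cite{journals/corr/abs-1011-1595} rather than anything the present paper contributes.
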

Let us define  $S= \Sigma^2 + \hat\rho I$  and
\[
\X = \left\{X_i = H^{-1/2}\left(\x_i\x_i^{\top} + \hat\rho I\right)H^{-1/2}, i=1, \ldots, n \right\}
\]
First we show that 
 $$\lambda_{\max}(X_i) \leq \mu(\hat\rho) \gamma(C, \hat\rho) + 1.$$ Since
\[
\mu_{\max} = m \lambda_{\max}(\E_i[X_i]) = m
\]
This can be proved by noting that 
\begin{align*}
\lambda_{\max}(H^{-1/2}\hat\rho IH^{-1/2})&= \max_i\frac{\hat\rho}{\hat\rho + \sigma_i^2}\leq 1\\
\lambda_{\max}(H^{-1/2}\x_i\x_i^{\top}H^{-1/2})&\leq \x_i^{\top}H^{-1}\x_i\leq \mu(\hat\rho) \gamma(C, \hat\rho) 
\end{align*}
where the second inequality is due to Lemma~\ref{lem:2} and the new definition of $H$. 
By applying the above Lemma and noting that $\bar X =\frac{1}{m} \sum_{i=1}^mX_i = H^{-1/2}\Hh H^{-1/2}$,   we have
\begin{align*}
&\Pr\left\{\lambda_{\min}\left(H^{-1/2}\Hh H^{-1/2}\right) \leq 1 - \delta \right\}\\
& \leq d\exp\left( - \frac{m}{\mu(\hat\rho) \gamma(C,\hat\rho) + 1}\left[(1 - \delta)\log(1 - \delta) + \delta\right]\right)
\end{align*}
Using the fact that
\[
(1 - \delta)\log(1 - \delta) \geq - \delta + \frac{\delta^2}{2}
\]
and  by setting $m = 2(\mu(\hat\rho) \gamma(C, \hat\rho) + 1) (\log d + t)/\delta^2$, we have with a probability $1-e^{-t}$, 
\[
\lambda_{\min}\left(H^{-1/2}\Hh H^{-1/2}\right)\geq 1-\delta
\]
As a result, we have with a probability $1 - e^{-t}$, 
\begin{align*}
\lambda_{\max}\left(H^{1/2}\Hh^{-1}H^{1/2}\right)&\leq \frac{1}{\lambda_{\min}\left(H^{-1/2}\Hh H^{-1/2}\right)}\\
&\leq \frac{1}{1-\delta}\leq 1+2\delta, \quad\forall \delta\leq 1/2. 
\end{align*}
Therefore, we have with a probability $1 - e^{-t}$ for any $\delta\leq 1/2$, 
\[
\x_i^{\top}\Hh^{-1}\x_i \leq (1+2\delta)\mu(\hat\rho)\gamma(C,\hat\rho), \quad i=1,\ldots, n
\]
\end{proof}

\bibliography{icml14}
\bibliographystyle{abbrv}

\end{document}